\documentclass[final]{siamltex}
\usepackage{amsfonts}

\usepackage{graphics}


\usepackage{amssymb} \usepackage{color}
\usepackage{graphicx}

\title{Error estimates and a two grid scheme for approximating transmission eigenvalues\thanks{This
 work was supported by the National Science Foundation of China (Grant Nos.11561014, 11201093).}}


\author{YIDU YANG$^{\small \dag}$, JIAYU HAN$^{\small \dag}$, AND HAI BI
\thanks{\tt School of Mathematics and Computer Science,Guizhou
Normal University,GuiYang,550001, China ({\tt
ydyang@gznu.edu.cn,hanjiayu126@126.com,bihaimath@gznu.edu.cn}).}}
\begin{document}
\maketitle
\begin{abstract}
In this paper, using the linearization technique we write the
Helmholtz transmission eigenvalue problem as an equivalent
nonselfadjoint linear eigenvalue problem whose  left-hand side term
is a selfadjoint, continuous and coercive sesquilinear form. To
solve the resulting nonselfadjoint eigenvalue problem, we give an
$H^{2}$ conforming finite element discretization and establish a two
grid discretization scheme. We present a complete  error analysis
for both discretization schemes, and theoretical analysis and
numerical experiments show that the methods presented in this paper
can efficiently compute real and complex transmission eigenvalues.
\end{abstract}
\begin{keywords}
transmission eigenvalues, finite element method, two grid discretization scheme,
error estimates
\end{keywords}
\begin{AMS}
65N25, 65N30, 65N15
\end{AMS}

\pagestyle{myheadings} \thispagestyle{plain} \markboth{YIDU YANG,
JIAYU HAN AND HAI BI }{ ERROR ESTIMATES FOR TRANSMISSION
EIGENVALUES}

\section{Introduction}

\indent Transmission eigenvalues not only have wide physical
applications, for example, they can be used to obtain estimates for
the material properties of the scattering object
\cite{cakoni1,cakoni2}, but also have theoretical importance in the
uniqueness and reconstruction in inverse scattering theory
\cite{colton1}. Many papers such as
\cite{cakoni2,colton1,kirsch,paivarinta,rynne} study the existence
of transmission eigenvalues, and \cite{cakoni2,cakoni3,colton3}
explore  upper and lower bounds for the index of refraction
$n(x)$ from  knowledge of the transmission eigenvalues.
The attention from the computational mathematics community is also increasing (see, e.g., \cite{an1,cakoni4,colton2,gintides,ji,ji2,monk,sun1,su4}). \\
\indent In this paper, we consider the Helmholtz transmission
eigenvalue problem: Find $k\in \mathbb{C}$, $w, \sigma\in L^{2}(D)$,
$w-\sigma\in H^{2}(D)$ such that
\begin{eqnarray}\label{s1.1}
&&\Delta w+k^{2}n(x)w=0,~~~in~ D,\\\label{s1.2}
 &&\Delta
\sigma+k^{2}\sigma=0,~~~in~ D,\\\label{s1.3}
 &&w-\sigma=0,~~~on~ \partial
D,\\\label{s1.4}
 &&\frac{\partial w}{\partial \nu}-\frac{\partial
\sigma}{\partial \nu}=0,~~~ on~\partial D,
\end{eqnarray}
where $D\subset \mathbb{R}^{d}$ $(d=2,3)$ is a bounded simply
connected set containing an inhomogeneous medium, and $\nu$ is the unit outward normal
to $\partial D$.\\
\indent From \cite{cakoni3,rynne} we know that for $u=w-\sigma\in
H_{0}^{2}(D)$,  the weak formulation for the transmission eigenvalue
problem (\ref{s1.1})-(\ref{s1.4}) can be stated as follows: Find
$k^{2}\in \mathbb{C}$, $k^{2}\not=0$, $u\in
H_{0}^{2}(D)\backslash\{0\}$ such that
\begin{eqnarray}\label{s1.5}
(\frac{1}{n(x)-1}(\Delta u+k^{2} u),\Delta v+\overline{k^{2}} n(x)v
)_{0}=0,~~~\forall v \in~H_{0}^{2}(D),
\end{eqnarray}
where  $(\psi,\varphi)_{0}=\int_{D}\psi\overline{\varphi}dx$ denotes
the $L^{2}(D)$ inner product. We denote $\lambda=k^{2}$ as usual,
then (\ref{s1.5}) is a quadratic
eigenvalue problem.\\
\indent In recent years, based on this weak formulation, various
numerical methods have been presented to solve the transmission
eigenvalue problem. The first numerical treatment appeared in
\cite{colton2} where three finite element methods were proposed for
the Helmholtz transmission eigenvalues which have been further
developed in \cite{an1,cakoni4,gintides,ji,ji2,sun1}. Among them
\cite{colton2,ji2,sun1} studied the $H^{2}$ conforming finite
element method, \cite{cakoni4,colton2,ji} mixed finite element
methods, \cite{an1} spectral-element method and \cite{gintides} the
Galerkin-type numerical method.
 Inspired by these
works, this paper further studies the $H^{2}$ conforming finite
element method and has three features as follows:\\
 \indent (1)~A complete error analysis is presented. Due to the
fact that the problem is neither elliptic nor self-adjoint, once its
error analysis  was viewed as a difficult task.
 Sun \cite{sun1}
first proved an  error estimate for the $H^{2}$ conforming finite element approximation
of (\ref{s1.5}). Following him Ji et al. \cite{ji2} presented an
accurate error estimate, but Theorems 1-2 therein are only
valid for real eigenvalues and rely on the conditions of Lemma 3.2
in \cite{sun1} which are not easy to verify in general, especially
for multiple eigenvalues. In this paper, we use the linearization
technique in \cite{tisseur} to write the weak formulation
(\ref{s1.5}) as an equivalent nonselfadjoint linear eigenvalue
problem (\ref{s2.6}). Then the $H^{2}$ conforming finite element
approximation eigenpair $(\lambda_{h}, u_{h})$ of (\ref{s2.6}) is
exactly the one of (\ref{s1.5}). Fortunately, the left-hand side
term  of (\ref{s2.6}) is a selfadjoint, continuous and coercive
sesquilinear form, thus we can use Babuska-Osborn's spectral
approximation theory \cite{babuska} to give a  complete error
analysis  for the $H^{2}$ conforming finite element approximation of
(\ref{s2.6}), namely the error analysis of (\ref{s1.5}).
 The difficulty of the error estimates in low norms lies in the
nonsymmetry of the right-hand side term of (\ref{s2.6}) that
involves derivatives.  We introduce two auxiliary problems
 and overcome this difficulty by the
Nitche technique in a subtle way. Our theoretical results are proved under general conditions
 and  are valid for arbitrary real and complex eigenvalues.
\\
\indent(2)~A finite element discretization with good algebraic
properties is presented. We give an $H^{2}$ conforming finite
element discretization (see (\ref{s3.1}) or (\ref{s5.1})) that will
lead to a  positive  definite  Hermitian and block diagonal stiff
matrix. Then we use this discretization to solve the transmission
eigenvalue problem numerically and obtain both real and complex
transmission eigenvalues of high accuracy as expected. Similar
discretizations already exist in the literatures. Colton et al.
\cite{colton2} established the $H^{2}$ conforming finite element
discretization for the formulation (\ref{s1.5}) (see (4.3) in
\cite{colton2}),
 by starting with discretizing (\ref{s1.5}) into a
quadratic algebra eigenvalue problem and then linearizing it, which
is in the opposite order of our treatment.
 Though our discrete form (\ref{s5.1}) is formally different from
the one in \cite{colton2} for the resulting mass matrix in
\cite{colton2}   is positive definite Hermitian and block diagonal,
the two discrete forms are equivalent from the linear algebra point
of view. Gintides et al. \cite{gintides} has used square root of
matrices, which is a standard tool, and a Hilbert basis in
$H_{0}^{2}(D)$ as test functions to discretizite (\ref{s1.5}) into a
linear eigenvalue problem (see (2.14) in \cite{gintides}) and proved
the convergence for the approximate eigenvalues.\\
 \indent (3) A
two grid discretization scheme is proposed. As we know, it is
difficult to solve nonselfadjoint eigenvalue problems in general.
So, Sun \cite{sun1} adopted iterative methods to work on a series of
generalized Hermitian problems and finally to solve for the real
transmission eigenvalues. Ji et al. \cite{ji2} combined the
iterative methods in \cite{sun1} with the extended finite element
method
  to establish a multigrid algorithm for solving the real transmission eigenvalues.
 This paper establishes a two grid
discretization scheme directly for the nonselfadjoint eigenvalue
problem (\ref{s2.6}) which is suitable for  arbitrary real and
complex  eigenvalues.
 The two grid discretization scheme is an
efficient approach which was first introduced by Xu \cite{xu1} for
nonsymmetric or indefinite problems, and successfully applied to
eigenvalue problems later (see, e.g.,
\cite{dai,kolman,xu3,yang1,zhou} and the references therein). With
the two grid discretization scheme, the solution of the transmission
eigenvalue problem on a fine grid $\pi_{h}$ is reduced to the
solution of the primal and dual eigenvalue problem on a much coarser
grid $\pi_{H}$ and the solutions of two linear algebraic systems
with the same positive definite Hermitian and block diagonal
coefficient matrix on the fine grid $\pi_{h}$, and the resulting
solution still maintains an asymptotically optimal accuracy. The key
to our theoretical analysis is to find a
$(u_{H}^{*},\omega_{H}^{*})$ and prove that $|B((u_{H},\omega_{H}),
(u_{H}^{*},\omega_{H}^{*}))|$ (see Step 1 of Scheme 4.1) has a
positive lower bound uniformly with respect to the mesh size $H$,
which  is also critical for further establishing multigrid method
and adaptive algorithm. We successfully apply spectral approximation
theory to solve this
problem (see Lemma 4.1 and Remark 4.1).\\
\indent The $H^{2}$ conforming finite element discretization is easy
to implement under the package of iFEM \cite{chen} with MATLAB, and
the numerical results indicate that this discretization  is
efficient for computing transmission eigenvalues.  Moreover, we can
further improve the computational efficiency
by using the two grid discretization scheme.\\
\indent Regarding the basic theory of  finite
element methods, we refer to \cite{babuska,brenner,ciarlet,oden}.\\
\indent Throughout this paper, $C$ denotes a positive constant
independent of the mesh size $h$, which may not be the same constant
in different places. For simplicity, we use the symbol $a\lesssim b$
to mean that $a\leq C b$.

\section{A linear formulation of (\ref{s1.5})}
\indent In this paper, we suppose that the index of refraction $n\in
L^{\infty}(D)$ satisfies the following assumption
\begin{eqnarray*}
1+\delta\leq \inf_{D}n(x)\leq n(x)\leq \sup_{D}n(x)<\infty,
\end{eqnarray*}
for some constant $\delta>0$, although, with obvious changes, the
theoretical analysis in this paper
also holds for $n$ strictly less than $1$.\\
\indent Inspired by the works in \cite{cakoni4,colton2,ji}, we now
apply the linearization technique in \cite{tisseur} to write the
weak
formulation (\ref{s1.5}) as an equivalent linear formulation.\\
\indent From (\ref{s1.5}) we derive that
\begin{eqnarray}\label{s2.1}
&&(\frac{1}{n-1}\Delta u, \Delta v)_{0}+k^{2}
(\frac{1}{n-1}u, \Delta v)_{0}\nonumber\\
&&~~~~~~+k^{2}(\Delta u, \frac{n}{n-1}v)_{0}+k^{4} (\frac{n}{n-1}u,v
)_{0}=0,~~~\forall v \in~H_{0}^{2}(D).
\end{eqnarray}
Introduce an auxiliary variable
\begin{eqnarray}\label{s2.2}
\omega=k^{2}u,
\end{eqnarray}
then
\begin{eqnarray}\label{s2.3}
(\omega, z)_{0}=k^{2}(u, z)_{0},~~~\forall z\in L^{2}(D).
\end{eqnarray}
Thus, combining (\ref{s2.1}) and (\ref{s2.3}), we arrive at a linear
 formulation: Find $k^{2}\in \mathbb{C}$ and nontrivial $( u, \omega) \in
H_{0}^{2}(D)\times L^{2}(D)$ such that
\begin{eqnarray}\label{s2.4}
&&(\frac{1}{n-1}\Delta u, \Delta v)_{0}=-k^{2} (\frac{1}{n-1}u,
\Delta v)_{0}\nonumber\\
&&~~~~~~-k^{2}(\Delta u, \frac{n}{n-1}v)_{0}-k^{2}
(\frac{n}{n-1}\omega, v )_{0},~~~\forall v
\in~H_{0}^{2}(D),\\\label{s2.5}
 &&(\omega,
z)_{0}=k^{2}(u,z)_{0},~~~\forall z\in L^{2}(D).
\end{eqnarray}
This is a nonselfadjoint linear eigenvalue problem.\\
\indent Suppose that $(k^{2}, u)$ is an eigenpair of (\ref{s1.5}),
then it is obvious that $(k^{2}, u, \omega)$ is an eigenpair of
(\ref{s2.4})-(\ref{s2.5}). On the other hand, suppose that $(k^{2},
u, \omega)$ satisfies (\ref{s2.4})-(\ref{s2.5}), from (\ref{s2.5})
we get $\omega=k^{2}u$, and substituting
 it into (\ref{s2.4}) we get (\ref{s2.1})(or (\ref{s1.5})). The above argument indicates that
(\ref{s2.4})-(\ref{s2.5}) and (\ref{s1.5}) are equivalent.\\
\indent Let $H^{-l}(D)$ be the ``negative space", with norm given by
\begin{eqnarray*}
\|f\|_{-l}=\sup\limits_{0\not=v\in
H_0^{l}(D)}\frac{|(f,v)_{0}|}{\|v\|_{l}},~~~l=1,2.
\end{eqnarray*}
\indent Define the Hilbert space $\mathbf{H}=H_{0}^{2}(D)\times
L^{2}(D)$ with norm
$\|(u,\omega)\|_{\mathbf{H}}=\|u\|_{2}+\|\omega\|_{0}$, and define
$\mathbf{H}^{s}=H^{s}(D)\times H^{s-2}(D)$ with norm
$\|(u,\omega)\|_{\mathbf{H}^{s}}=\|u\|_{s}+\|\omega\|_{s-2}$,
$s=0,1$, $\mathbf{H}^{2}=\mathbf{H}$.
It's obvious that $\mathbf{H}\hookrightarrow\mathbf{H}^{s}~(s=0,1)$ compactly (see\cite{berezanskii, xu3}  ). \\
\indent Denote $\lambda=k^{2}$. Let
\begin{eqnarray*}
&&A((u,\omega),(v,z))=(\frac{1}{n-1}\Delta u, \Delta v)_{0}+(\omega, z)_{0},\\
&&B((u,\omega),(v,z))=-(\frac{1}{n-1}u, \Delta v)_{0}-(\Delta u,
\frac{n}{n-1}v)_{0}- (\frac{n}{n-1}\omega, v )_{0}+(u,z)_{0},
\end{eqnarray*}
then (\ref{s2.4})-(\ref{s2.5}) can be rewritten as: Find $\lambda\in
\mathbb{C}$, $(u,\omega)\in \mathbf{H}\backslash\{\mathbf{0}\}$ such
that
\begin{eqnarray}\label{s2.6}
A((u,\omega),(v,z)) =\lambda B((u,\omega),(v,z)),~~~\forall (v,z)\in
\mathbf{H}.
\end{eqnarray}
Thus we get the following theorem.
\begin{theorem}
If $(k^{2}, u)$ is an eigenpair of (\ref{s1.5})
 and $\omega=k^{2} u$,
 then $(k^{2}, u, \omega)$ satisfies (\ref{s2.6}),
 and if $(k^{2}, u, \omega)$ satisfies (\ref{s2.6}), then  $(k^{2}, u)$ is an eigenpair of (\ref{s1.5})
 and $\omega=k^{2}u$.
 \end{theorem}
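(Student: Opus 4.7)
The plan is to exploit the fact that equation (\ref{s2.6}) is nothing more than a compact rewriting of the pair (\ref{s2.4})--(\ref{s2.5}): choosing test functions of the form $(v,0)$ recovers (\ref{s2.4}), while $(0,z)$ recovers (\ref{s2.5}). So once this observation is made, it suffices to prove equivalence between (\ref{s1.5}) and the system (\ref{s2.4})--(\ref{s2.5}).

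For the forward direction, I would take an eigenpair $(k^2,u)$ of (\ref{s1.5}) and define $\omega := k^2 u$. Since $u\in H_0^2(D)$, certainly $\omega\in L^2(D)$, so $(u,\omega)\in\mathbf{H}$. Then (\ref{s2.5}) is literally the definition of $\omega$ tested against arbitrary $z\in L^2(D)$, and (\ref{s2.4}) follows by expanding (\ref{s1.5}) into the four terms displayed in (\ref{s2.1}) and replacing the coefficient $k^2$ in front of the mass-type term $(\tfrac{n}{n-1}u,v)_0$ by using $k^2 u=\omega$ so as to obtain the term $-k^2(\tfrac{n}{n-1}\omega,v)_0$ of (\ref{s2.4}).

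For the reverse direction, I would start from a triple $(k^2,u,\omega)$ satisfying (\ref{s2.4})--(\ref{s2.5}). Testing (\ref{s2.5}) against arbitrary $z\in L^2(D)$ gives $(\omega-k^2 u,z)_0=0$ for all $z\in L^2(D)$, so $\omega=k^2 u$ as elements of $L^2(D)$. Substituting this identity into (\ref{s2.4}) collapses its last two right-hand side terms back into the single quartic term $k^4(\tfrac{n}{n-1}u,v)_0$ of (\ref{s2.1}), so (\ref{s2.4}) becomes (\ref{s2.1}), which is just the expanded form of (\ref{s1.5}). Thus $(k^2,u)$ is an eigenpair of (\ref{s1.5}). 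Nontriviality is preserved in both directions: in the forward direction $u\neq 0$ trivially gives $(u,\omega)\neq\mathbf{0}$; in the reverse direction, if $u=0$ then $\omega=k^2 u=0$, contradicting $(u,\omega)\neq\mathbf{0}$.

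There is no real obstacle here — the whole content is the linearization trick of introducing $\omega=k^2 u$. The only place requiring a hint of care is checking that the test space $L^2(D)$ in (\ref{s2.5}) is large enough to force the pointwise (a.e.) identity $\omega=k^2 u$, which it is; and that the algebraic substitution in the fourth term of (\ref{s2.1}) uses $k^2 u=\omega$ consistently in both directions.
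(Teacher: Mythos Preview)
Your proposal is correct and follows essentially the same approach as the paper: the paper likewise observes that (\ref{s2.6}) is simply (\ref{s2.4})--(\ref{s2.5}) rewritten with the sesquilinear forms $A$ and $B$, and then argues the equivalence of (\ref{s1.5}) with (\ref{s2.4})--(\ref{s2.5}) exactly as you do, by defining $\omega=k^2u$ in the forward direction and recovering $\omega=k^2u$ from (\ref{s2.5}) and substituting into (\ref{s2.4}) in the reverse direction. Your version is in fact slightly more explicit, as you spell out the nontriviality check and the density argument behind $\omega=k^2u$, which the paper leaves implicit.
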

\indent By calculation we derive for any $(u,\omega),(v,z)\in
\mathbf{H}$,
\begin{eqnarray*}
&&A((u,\omega),(v,z))=\int\limits_{D}\frac{1}{n-1}\Delta u\overline{
\Delta v}+\omega
\overline{z}dx=\overline{\int\limits_{D}\frac{1}{n-1}\Delta
v\overline{
\Delta u}+z\overline{\omega}dx}=\overline{A((v,z), (u,\omega))},\\
&&|A((u,\omega),(v,z))| \lesssim \|\Delta u\|_{0}\| \Delta
v\|_{0}+\|\omega\|_{0} \|z\|_{0}\lesssim
\|(u,\omega)\|_{\mathbf{H}}\|(v,z)\|_{\mathbf{H}},\\
&&A((u,\omega),(u,\omega))=\int\limits_{D}\frac{1}{n-1}\Delta
u\overline{ \Delta u}+\omega \overline{\omega}dx\gtrsim
\|(u,\omega)\|_{\mathbf{H}}^{2},
\end{eqnarray*}
i.e.,
$A(\cdot,\cdot)$ is a selfadjoint, continuous and coercive
sesquilinear form on $\mathbf{H}\times
\mathbf{H}$.\\
\indent We use $A(\cdot,\cdot)$ and
$\|\cdot\|_{A}=A(\cdot,\cdot)^{\frac{1}{2}}$ as an inner product and
norm on $\mathbf{H}$, respectively. Then $\|\cdot\|_{A}$ is
equivalent to the norm $\|\cdot\|_{\mathbf{H}}$.
\\
\indent When $n\in L^{\infty}(D)$, $\forall (f,g)$, $(v, z)\in
\mathbf{H}$, we deduce
\begin{eqnarray}\label{s2.7}
&&|B((f,g), (v, z))|=|-(\frac{1}{n-1}f, \Delta v)_{0}-(\Delta f,
\frac{n}{n-1}v)_{0}- (\frac{n}{n-1}g, v
)_{0}+(f,z)_{0}|\nonumber\\
&&~~~\lesssim \|\frac{1}{n-1}f\|_{0}\|v\|_{2}+ \|\frac{n}{n-1}\Delta
f\|_{-1}\|v\|_{1}+
\|\frac{n}{n-1}g\|_{-1}\|v\|_{1}+ \|f\|_{0}\|z\|_{0}\nonumber\\
&&~~~\lesssim (\|f\|_{0}+\|\frac{n}{n-1}\Delta
f\|_{-1}+\|\frac{n}{n-1}g\|_{-1})\|(v,z)\|_{\mathbf{H}}.
\end{eqnarray}
\noindent When $n\in W^{1,\infty}(D)$, $\forall (f,g)\in
\mathbf{H}^{1},~\forall (v, z)\in \mathbf{H}$, we have
\begin{eqnarray}\label{s2.8}
&&|B((f,g), (v, z))|=|(\nabla(\frac{1}{n-1}f), \nabla v)_{0}+(\nabla
f, \nabla(\frac{n}{n-1}v))_{0}- (\frac{n}{n-1}g, v
)_{0}+(f,z)_{0}|\nonumber\\
&&~~~~~~\lesssim \|f\|_{1}\|v\|_{1}+ \|f\|_{1}\|v\|_{1}+
\|g\|_{-1}\|v\|_{1}+ \|f\|_{1}\|z\|_{-1}\nonumber\\
&&~~~~~~\lesssim \|(f,g)\|_{\mathbf{H}^{1}}\|(v,z)\|_{\mathbf{H}}.
\end{eqnarray}
And when $n\in W^{2,\infty}(D)$, $\forall (f,g) \in
\mathbf{H}^{0},~\forall (v, z)\in \mathbf{H}$, we have
\begin{eqnarray}\label{s2.9}
&&|B((f,g), (v, z))|=|-(\frac{1}{n-1}f, \Delta v)_{0}-( f,
\Delta(\frac{n}{n-1}v))_{0}- (\frac{n}{n-1}g, v
)_{0}+(f,z)_{0}|\nonumber\\
&&~~~~~~\lesssim \|f\|_{0}\|v\|_{2}+ \|f\|_{0}\|v\|_{2}+
\|g\|_{-2}\|v\|_{2}+ \|f\|_{0}\|z\|_{0}\nonumber\\
&&~~~~~~\lesssim \|(f,g)\|_{\mathbf{H}^{0}}\|(v,z)\|_{\mathbf{H}}.
\end{eqnarray}
 \indent We can see from (\ref{s2.7})-(\ref{s2.9}) that
for any given $(f,g)\in \mathbf{H}^{s}$ ($s=0,1,2$), $B((f,g), (v,
z))$ is a continuous linear form on $\mathbf{H}$.\\
 \indent The
source problem associated with (\ref{s2.6}) is given by: Find
$(\psi,\varphi)\in \mathbf{H}$ such that
\begin{eqnarray}\label{s2.10}
A((\psi,\varphi),(v,z)) =B((f,g),(v,z)),~~~\forall (v,z)\in
\mathbf{H}.
\end{eqnarray}
From the Lax-Milgram theorem we know that (\ref{s2.10}) admits a
unique solution. Therefore, we define the corresponding solution
operators $T: \mathbf{H}^{s}\to \mathbf{H}$ (s=0,1,2) by
\begin{eqnarray}\label{s2.11}
A(T(f,g),(v,z)) =B((f,g), (v,z)),~~~\forall (v,z)\in \mathbf{H}.
\end{eqnarray}
Then (\ref{s2.6}) has the equivalent operator form:
\begin{eqnarray}\label{s2.12}
T(u,\omega)= {\lambda^{-1}}(u,\omega).
\end{eqnarray}
Namely, if $(\lambda,u,\omega)$ is an eigenpair of (\ref{s2.6}),
then $(\lambda,u,\omega)$ is an eigenpair of (\ref{s2.12}).
Conversely, if $(\lambda,u,\omega)$ is an eigenpair of
(\ref{s2.12}), then $(\lambda,u,\omega)$ is an eigenpair of (\ref{s2.6}).\\
\begin{theorem}
Suppose $n\in L^{\infty}(D)$, then $T: \mathbf{H}\to \mathbf{H}$ is
compact; suppose $n\in W^{2-s,\infty}(D)$ $(s=0,1)$, then $T:
\mathbf{H}^{s}\to \mathbf{H}^{s}$ is compact.
\end{theorem}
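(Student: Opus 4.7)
The strategy is the standard factorization through a compact embedding: I will first show that $T$ maps $\mathbf{H}^{s}$ boundedly into the smaller space $\mathbf{H}$, and then use the compact inclusion $\mathbf{H}\hookrightarrow\mathbf{H}^{s}$ noted just after the definition of $\mathbf{H}^{s}$. All the analytic content is already packaged in the continuity estimates (\ref{s2.7})--(\ref{s2.9}); only a little bookkeeping is needed to turn them into compactness statements.

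For the boundedness step, I would plug $(v,z)=T(f,g)$ into the defining identity (\ref{s2.11}) and use coercivity of $A$, i.e.\ $\|(u,\omega)\|_{\mathbf{H}}^{2}\lesssim A((u,\omega),(u,\omega))$, to obtain
\begin{eqnarray*}
\|T(f,g)\|_{\mathbf{H}}^{2}\lesssim |A(T(f,g),T(f,g))|=|B((f,g),T(f,g))|.
\end{eqnarray*}
Under the hypothesis $n\in W^{2-s,\infty}(D)$, the continuity estimates (\ref{s2.9}), (\ref{s2.8}), and (\ref{s2.7}) (for $s=0,1,2$ respectively) bound the right-hand side by $\|(f,g)\|_{\mathbf{H}^{s}}\|T(f,g)\|_{\mathbf{H}}$; dividing out one factor of $\|T(f,g)\|_{\mathbf{H}}$ yields
\begin{eqnarray*}
\|T(f,g)\|_{\mathbf{H}}\lesssim \|(f,g)\|_{\mathbf{H}^{s}},
\end{eqnarray*}
so $T:\mathbf{H}^{s}\to\mathbf{H}$ is continuous in each case.

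For $s=0,1$ the conclusion is then immediate: $T$ factors as $\mathbf{H}^{s}\to\mathbf{H}\hookrightarrow\mathbf{H}^{s}$, with the first arrow bounded and the second one the compact embedding already stated in the excerpt, so the composition $T:\mathbf{H}^{s}\to\mathbf{H}^{s}$ is compact. The only case requiring extra care is $s=2$, because $\mathbf{H}^{2}=\mathbf{H}$ and the identity on $\mathbf{H}$ is not compact. Here I would refine (\ref{s2.7}) by the elementary observation that for $f\in H_{0}^{2}(D)$ an integration by parts gives $\|\Delta f\|_{-1}\leq\|\nabla f\|_{0}\lesssim\|f\|_{1}$, which upgrades (\ref{s2.7}) to $|B((f,g),(v,z))|\lesssim \|(f,g)\|_{\mathbf{H}^{1}}\|(v,z)\|_{\mathbf{H}}$. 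The same Lax--Milgram step then gives $\|T(f,g)\|_{\mathbf{H}}\lesssim \|(f,g)\|_{\mathbf{H}^{1}}$, so $T:\mathbf{H}\to\mathbf{H}$ factors as $\mathbf{H}\hookrightarrow\mathbf{H}^{1}\to\mathbf{H}$, the first map being compact and the second one bounded; compactness follows.

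The main (and essentially only) obstacle is this $s=2$ case, where $(f,g)$ already lives in the target space of $T$ and no regularity is gained by the inclusion alone. The integration-by-parts upgrade of (\ref{s2.7}) is what closes that gap; everything else is a routine consequence of the Lax--Milgram argument already implicit in the definition (\ref{s2.11}).
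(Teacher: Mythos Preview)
Your treatment of the cases $s=0,1$ is correct and matches the paper exactly: coercivity plus (\ref{s2.8})--(\ref{s2.9}) gives $T:\mathbf{H}^{s}\to\mathbf{H}$ bounded, and the compact embedding $\mathbf{H}\hookrightarrow\mathbf{H}^{s}$ finishes the job.

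The gap is in the $n\in L^{\infty}(D)$ case. Your proposed upgrade of (\ref{s2.7}) does not go through. The bound you quote, $\|\Delta f\|_{-1}\le\|\nabla f\|_{0}$, is fine, but the term appearing in (\ref{s2.7}) is $\|\tfrac{n}{n-1}\Delta f\|_{-1}$, not $\|\Delta f\|_{-1}$. Multiplication by an $L^{\infty}$ coefficient is \emph{not} bounded on $H^{-1}(D)$: to estimate $(\tfrac{n}{n-1}\Delta f,v)_{0}=(\Delta f,\tfrac{n}{n-1}v)_{0}$ against $\|\Delta f\|_{-1}$ you would need $\|\tfrac{n}{n-1}v\|_{1}\lesssim\|v\|_{1}$, which requires $n\in W^{1,\infty}(D)$. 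The same obstruction blocks $\|\tfrac{n}{n-1}g\|_{-1}\lesssim\|g\|_{-1}$. So under $n\in L^{\infty}$ alone you cannot conclude $\|T(f,g)\|_{\mathbf{H}}\lesssim\|(f,g)\|_{\mathbf{H}^{1}}$, and your factorization $\mathbf{H}\hookrightarrow\mathbf{H}^{1}\to\mathbf{H}$ collapses; the second arrow is exactly the content of (\ref{s2.8}), which already needed $W^{1,\infty}$.

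The paper avoids this by \emph{not} trying to absorb the coefficient into an $H^{-1}$ bound. It keeps the estimate (\ref{s2.13}) as written and argues compactness term by term: for a bounded set $S_{1}\times S_{2}\subset\mathbf{H}$, the map $f\mapsto f$ is compact $H_{0}^{2}\to L^{2}$; the map $f\mapsto\tfrac{n}{n-1}\Delta f$ is bounded $H_{0}^{2}\to L^{2}$ (here multiplication by $L^{\infty}$ \emph{is} harmless) followed by the compact embedding $L^{2}\hookrightarrow H^{-1}$; and likewise $g\mapsto\tfrac{n}{n-1}g$ is bounded $L^{2}\to L^{2}$ followed by $L^{2}\hookrightarrow H^{-1}$ compactly. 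The point is that the compact step is placed \emph{after} the multiplication, at the $L^{2}\hookrightarrow H^{-1}$ stage, where $L^{\infty}$ multipliers cause no trouble.
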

\begin{proof}
When $n\in L^{\infty}(D)$, let $(v,z)=T(f,g)$ in (\ref{s2.11}), then
from (\ref{s2.7}) we have
\begin{eqnarray*}
&&\|T(f,g)\|_{\mathbf{H}}^{2}\lesssim A(T(f,g),T(f,g)) =B((f,g),
T(f,g))\\
&&~~~ \lesssim (\|f\|_{0}+\|\frac{n}{n-1}\Delta
f\|_{-1}+\|\frac{n}{n-1}g\|_{-1})\|T(f,g)\|_{\mathbf{H}},
\end{eqnarray*}
thus
\begin{eqnarray}\label{s2.13}
\|T(f,g)\|_{\mathbf{H}} \lesssim \|f\|_{0}+\|\frac{n}{n-1}\Delta
f\|_{-1}+\|\frac{n}{n-1}g\|_{-1},~~~\forall (f,g) \in \mathbf{H}.
\end{eqnarray}
Let $\mathbf{S}=S_{1}\times S_{2}$ be any bounded set in
$\mathbf{H}$, because of the compact embedding
$H_{0}^{2}(D)\hookrightarrow L^{2}(D)$ and $L^{2}(D)\hookrightarrow
H^{-1}(D)$, then $S_{1}$ is relatively compact in $L^{2}(D)$, $\{v:
v=\frac{n}{n-1}\Delta f, f\in S_{1}\}$ is relatively compact in
$H^{-1}(D)$ and $\{v: v=\frac{n}{n-1}g, g\in S_{2}\}$ is relatively
compact in $H^{-1}(D)$. And from (\ref{s2.13}) we conclude that $T:
\mathbf{H}\to \mathbf{H}$ is
compact.\\
\indent When $n\in W^{2-s,\infty}(D)~(s=0,1)$, in (\ref{s2.11}), let
$(v,z)=T(f,g)$, then from (\ref{s2.8}) and (\ref{s2.9}) we have
\begin{eqnarray*}
 \|T(f,g)\|_{\mathbf{H}}^{2}\lesssim A(T(f,g),T(f,g)) =B((f,g), T(f,g))
\lesssim \|(f,g)\|_{\mathbf{H}^{s}}\|T(f,g)\|_{\mathbf{H}},
\end{eqnarray*}
thus
\begin{eqnarray}\label{s2.14}
\|T(f,g)\|_{\mathbf{H}} \lesssim
\|(f,g)\|_{\mathbf{H}^{s}},~~~\forall (f,g) \in
\mathbf{H}^{s}~(s=0,1),
\end{eqnarray}
which implies that $T: \mathbf{H}^{s}\to \mathbf{H}$ is continuous.
Due to the compact embedding $\mathbf{H}\hookrightarrow
\mathbf{H}^{s}$, $T: \mathbf{H}^{s}\to \mathbf{H}^{s}$ is compact.
\end{proof}
\\
 \indent Consider the dual problem of (\ref{s2.6}): Find
$\lambda^{*}\in \mathbb{C}$, $(u^{*},\omega^{*})\in
\mathbf{H}\backslash\{\mathbf{0}\}$ such that
\begin{eqnarray}\label{s2.15}
A((v,z), (u^{*},\omega^{*})) =\overline{\lambda^{*}} B((v,z),
(u^{*},\omega^{*})),~~~\forall (v,z)\in \mathbf{H}.
\end{eqnarray}
Define the corresponding solution operators $T^{*}:
\mathbf{H}^{s}\to \mathbf{H}$ by
\begin{eqnarray}\label{s2.16}
A((v,z), T^{*}(f,g)) =B((v,z), (f,g)),~~~\forall (v,z)\in
\mathbf{H}.
\end{eqnarray}
Then (\ref{s2.15}) has the equivalent operator form:
\begin{eqnarray}\label{s2.17}
T^{*}(u^{*},\omega^{*})=\lambda^{*-1}(u^{*},\omega^{*}).
\end{eqnarray}

\indent It can be proved that $T^{*}$ is the adjoint operator of $T$
in the sense of inner product $A(\cdot,\cdot)$. In fact, from
(\ref{s2.11}) and (\ref{s2.16}) we have
\begin{eqnarray}\label{s2.18}
&&A(T(f,g),(v, z))=B((f,g),(v,
z))\nonumber\\
&&~~~=A((f,g),T^{*}(v, z)),~~~\forall (f,g),(v, z)\in \mathbf{H}.
\end{eqnarray}
Hence the primal and dual eigenvalues are connected via
$\lambda=\overline{\lambda^{*}}$.

\section{The $H^{2}$ conforming finite element approximation and its error
estimates}

Let $\pi_{h}$ be a shape-regular grid of $D$ with mesh size $h$ and
$S^{h}\subset H_{0}^{2}(D)$ be a piecewise polynomial space defined
on $\pi_h$; for example, $S^{h}$ is the finite element space
associated with one of the Argyris element, the Bell element or the
Bogner-Fox-Schmit element (BFS element) (see \cite{ciarlet}). Thanks
to (\ref{s2.2}), we choose $\mathbf{H}_{h}=S^{h}\times S^{h}$. Then
$\mathbf{H}_{h}\subset \mathbf{H}$ be a conforming finite element
space. For the three finite element spaces mentioned above, since
$\bigcup_{h>0} S^{h}$ are dense in both $L^{2}(D)$ and
$H_{0}^{2}(D)$, it is obvious that the following
condition (C1) holds:\\
\indent (C1)~~If $(\psi,\varphi)\in \mathbf{H}$, then as $h\to 0$,
\begin{eqnarray*}
\inf\limits_{(v,z)\in \mathbf{H_{h}}
}\|(\psi,\varphi)-(v,z)\|_{\mathbf{H}}\to 0.
\end{eqnarray*}
From the operator interpolation theory (see e.g. \cite{brenner}), the following (C2) is also valid:\\
\indent (C2)~~If $\psi\in  H_{0}^{2}(D)\cap H^{2+r}(D)$ $(r\in
(0,2])$, then
\begin{eqnarray*}
\inf\limits_{v\in S^{h}}\|\psi-v\|_{s}\lesssim
h^{2+r-s}\|\psi\|_{2+r},~~~s=0,1,2.
\end{eqnarray*}
\indent Throughout this paper, we suppose that (C1) is valid.\\
\indent The $H^{2}$ conforming finite element approximation of
(\ref{s2.6}) is given by the following: Find $\lambda_{h}\in
\mathbb{C}$, $(u_{h},\omega_{h})\in
\mathbf{H}_{h}\backslash\{\mathbf{0}\}$ such that
\begin{eqnarray}\label{s3.1}
A((u_{h},\omega_{h}),(v,z)) =\lambda_{h}
B((u_{h},\omega_{h}),(v,z)),~~~\forall (v,z)\in \mathbf{H}_{h}.
\end{eqnarray}
\indent Consider the approximate source problem: Find
$(\psi_{h},\varphi_{h})\in \mathbf{H}_{h}\backslash\{\mathbf{0}\}$ such that
\begin{eqnarray}\label{s3.2}
A((\psi_{h},\varphi_{h}),(v,z)) =B((f,g), (v,z)),~~~\forall (v,z)\in
\mathbf{H}_{h}.
\end{eqnarray}
We introduce the corresponding solution operator: $T_{h}:
\mathbf{H}^{s}\to \mathbf{H}_{h}$ (s=0,1):
\begin{eqnarray}\label{s3.3}
A(T_{h}(f,g),(v,z)) =B((f,g), (v,z)),~~~\forall (v,z)\in
\mathbf{H}_{h}.
\end{eqnarray}
Then (\ref{s3.1}) has the operator form:
\begin{eqnarray}\label{s3.4}
T_{h}(u_{h},\omega_{h})= {\lambda_{h}^{-1}}(u_{h},\omega_{h}).
\end{eqnarray}
\indent Define the projection operators $P_{h}^{1}: H_{0}^{2}(D)\to
S^{h}$ and $P_{h}^{2}: L^{2}(D)\to S^{h}$ by
\begin{eqnarray}\label{s3.5}
&&(\frac{1}{n-1}\Delta (u-P_{h}^{1}u), \Delta v)_{0}=0,~~~\forall
v\in S^{h},\\\label{s3.6} &&(\omega-P_{h}^{2}\omega,
z)_{0}=0,~~~\forall z\in S^{h}.
\end{eqnarray}
Let
$$P_{h}(u,\omega)=(P_{h}^{1}u, P_{h}^{2}\omega),~~~\forall (u,\omega)\in \mathbf{H}.$$
Then $P_{h}: \mathbf{H}\to \mathbf{H}_{h}$, and
\begin{eqnarray}\label{s3.7}
&&A((u,\omega)-P_{h}(u,\omega),(v,z))
=A((u,\omega)-(P_{h}^{1}u,P_{h}^{2}\omega),(v,z))\nonumber\\
&&~~~=(\frac{1}{n-1}\Delta (u-P_{h}^{1}u), \Delta
v)_{0}+(\omega-P_{h}^{2}\omega, z)_{0}=0,~~~\forall (v,z)\in
\mathbf{H}_{h},
\end{eqnarray}
i.e., $P_{h}: \mathbf{H}\to \mathbf{H}_{h}$ is the Ritz projection.\\
\indent Combining (\ref{s3.7}), (\ref{s2.11}) with (\ref{s3.3}), we
deduce for any $(u, \omega)\in \mathbf{H}$ that
\begin{eqnarray*}
&&A(P_{h}T(u,\omega)-T_{h}(u,\omega),(v,z))=A(P_{h}T(u,\omega)-T(u,\omega),(v,z))\\
&&~~~~~~+A(T(u,\omega)-T_{h}(u,\omega),(v,z)) =0,~~~\forall (v,z)\in
\mathbf{H}_{h},
\end{eqnarray*}
thus we get
\begin{eqnarray}\label{s3.8}
T_{h}=P_{h}T.
\end{eqnarray}

\begin{theorem}
Let $n\in L^{\infty}(D)$, then
\begin{eqnarray}\label{s3.9}
&&\|T-T_{h}\|_{\mathbf{H}}\to 0,
\end{eqnarray}
and let $n\in W^{2-s,\infty}(D)$ , then
\begin{eqnarray}\label{s3.10}
\|T-T_{h}\|_{\mathbf{H}^{s}}\to 0,~~~s=0,1.
\end{eqnarray}
\end{theorem}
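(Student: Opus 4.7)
The plan for both (3.9) and (3.10) starts from the identity $T_h = P_h T$ of (3.8), which turns the claim into showing that $(I - P_h) T$ vanishes in operator norm. Since $A$ is coercive and continuous on $\mathbf{H}$, the projection $P_h$ is $A$-orthogonal onto $\mathbf{H}_h$, and Cea's lemma gives $\|y - P_h y\|_{\mathbf{H}} \lesssim \inf_{(v,z) \in \mathbf{H}_h} \|y - (v,z)\|_{\mathbf{H}}$ for every $y \in \mathbf{H}$. By (C1), this drives $P_h \to I$ strongly on $\mathbf{H}$, with $\{I - P_h\}$ uniformly bounded.

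For (3.9) I would couple this with the compactness of $T: \mathbf{H} \to \mathbf{H}$ from Theorem 2.2. The image of the unit ball $T(B_{\mathbf{H}})$ is then precompact in $\mathbf{H}$, and a standard finite-net argument --- cover $T(B_{\mathbf{H}})$ by $\epsilon$-balls centered at $y_1, \ldots, y_N$, use $\|(I - P_h) y_i\|_{\mathbf{H}} \to 0$ at each center, and absorb the tail through the uniform bound on $I - P_h$ --- yields $\sup_{y \in T(B_{\mathbf{H}})} \|(I - P_h) y\|_{\mathbf{H}} \to 0$, which is (3.9).

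For (3.10) the naive transport of this argument to $\mathbf{H}^s$ breaks down: although $T: \mathbf{H}^s \to \mathbf{H}^s$ is compact and the image of the $\mathbf{H}^s$-unit ball is precompact in $\mathbf{H}^s$, the projection $P_h$ is only defined on $\mathbf{H}$, and $\mathbf{H}^s$-closeness of $y$ to an $\epsilon$-net element $y_i$ does not control $\|P_h(y - y_i)\|_{\mathbf{H}^s}$. I would therefore switch to a componentwise duality argument. Setting $v = 0$ in the definition (2.11) of $T$ forces $\varphi = f$, so $T(f,g) = (\psi, f)$ and $T_h(f,g) = (P_h^1 \psi, P_h^2 f)$, hence
\begin{eqnarray*}
\|(T - T_h)(f,g)\|_{\mathbf{H}^s} = \|\psi - P_h^1 \psi\|_s + \|f - P_h^2 f\|_{s-2}.
\end{eqnarray*}
The second term is handled by dualising against $H_0^{2-s}(D)$ and invoking the $L^2$-approximation property of $P_h^2$, giving $\|f - P_h^2 f\|_{s-2} \lesssim h^{2-s} \|f\|_0$. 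For the first term I would apply Nitsche's trick to the form $A_0(u,v) = (\frac{1}{n-1} \Delta u, \Delta v)_0$: the dual problem $A_0(v,w) = (v,\phi)_0$ with data $\phi \in H^{-s}(D)$, under the assumption $n \in W^{2-s,\infty}$ and suitable regularity of $\partial D$, admits a solution satisfying $\|w\|_{2+r} \lesssim \|\phi\|_{-s}$ for some $r > 0$; Galerkin orthogonality together with the $H^2$-best approximation of $w$ in $S^h$ then yields $\|\psi - P_h^1 \psi\|_s \lesssim h^r \|\psi\|_2$.

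The main obstacle is this duality step, which rests on elliptic regularity of the variable-coefficient fourth-order operator associated with $A_0$; once that regularity is granted, summing the two component bounds produces $\|(T - T_h)(f,g)\|_{\mathbf{H}^s} \lesssim (h^r + h^{2-s}) \|(f,g)\|_{\mathbf{H}^s}$, which in turn implies (3.10).
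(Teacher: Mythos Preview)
Your argument for (3.9) is correct and matches the paper: the identity $T_h=P_hT$, strong convergence $P_h\to I$ on $\mathbf{H}$ via (C1), and compactness of $T:\mathbf{H}\to\mathbf{H}$ combine exactly as you describe.

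For (3.10), however, your route diverges from the paper and introduces hypotheses the theorem does not have. The paper's proof is short: it simply bounds $\|(I-P_h)T(f,g)\|_{\mathbf{H}^s}\lesssim\|(I-P_h)T(f,g)\|_{\mathbf{H}}$, notes pointwise convergence, and invokes compactness of $T:\mathbf{H}^s\to\mathbf{H}^s$. Your objection to this step is legitimate --- the $\epsilon$-net argument needs $(I-P_h)$ uniformly bounded on $\mathbf{H}^s$, and $P_h$ is not even defined there --- so the paper's proof is, as written, somewhat terse at this point. But your proposed repair via Nitsche duality requires the regularity assumption R(D) (and (C2) for the rate $h^{2-s}$), neither of which is a hypothesis of Theorem~3.1; those assumptions enter only later, in Lemma~3.4 and Theorem~3.5. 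So your argument proves a weaker statement than the theorem claims.

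The gap you spotted can in fact be closed without extra regularity. One clean way: suppose for contradiction that $h_k\to 0$, $(f_k,g_k)\in B_{\mathbf{H}^s}$, and $\|(I-P_{h_k})T(f_k,g_k)\|_{\mathbf{H}^s}\geq\epsilon$. By (2.14) the images $y_k=T(f_k,g_k)$ are bounded in $\mathbf{H}$, so along a subsequence $y_k\rightharpoonup y^\ast$ weakly in $\mathbf{H}$, hence strongly in $\mathbf{H}^s$ by compact embedding; likewise $P_{h_k}y_k$ is bounded in $\mathbf{H}$ and (subsequence) $P_{h_k}y_k\rightharpoonup\eta^\ast$ in $\mathbf{H}$. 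Galerkin orthogonality $A(y_k-P_{h_k}y_k,v_{h_k})=0$ together with (C1) forces $\eta^\ast=y^\ast$, whence $(I-P_{h_k})y_k\to 0$ in $\mathbf{H}^s$, a contradiction. This uses only (C1), the coercivity of $A$, and $T:\mathbf{H}^s\to\mathbf{H}$ bounded --- exactly the hypotheses available. Your duality machinery is what the paper saves for the \emph{rate} estimates in $\|\cdot\|_{\mathbf{H}^s}$, not for mere convergence.
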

 \begin{proof}
 When $n\in L^{\infty}(D)$,
 for any $ (f,g)\in \mathbf{H}$,
from (C1) we have
\begin{eqnarray*}
\|(I-P_{h})T(f,g)\|_{\mathbf{H}}=\|(T(f,g)-P_{h}T(f,g)\|_{\mathbf{H}}\lesssim\inf\limits_{(v,z)\in
\mathbf{H}_{h} }\|(T(f,g)-(v,z)\|_{\mathbf{H}}\to 0.
\end{eqnarray*}
Since $T: \mathbf{H}\to \mathbf{H}$ is compact, $T\{(f,g)\in
\mathbf{H}, \|(f,g)\|_{\mathbf{H}=1}\}$ is relatively compact.
 Thus by the definition of
operator norm we have
\begin{eqnarray*}
&&\|T-T_{h}\|_{\mathbf{H}}
=\sup\limits_{(f,g)\in \mathbf{H}, \|(f,g)\|_{\mathbf{H}}=1}\|(T-T_{h})(f,g)\|_{\mathbf{H}}\nonumber\\
&&~~~=\sup\limits_{(f,g)\in \mathbf{H},
\|(f,g)\|_{\mathbf{H}}=1}\|(I-P_{h})T(f,g)\|_{\mathbf{H}}\to 0.
\end{eqnarray*}
When $n\in W^{2-s,\infty}(D)$, for $s=0,1$, we have
\begin{eqnarray*}
\|(I-P_{h})T(f,g)\|_{\mathbf{H^{s}}}\lesssim\|(I-P_{h})T(f,g)\|_{\mathbf{H}}\to
0.
\end{eqnarray*}
Since $T: \mathbf{H}^{s}\to \mathbf{H}^{s}$ is compact, $T\{(f,g)\in
\mathbf{H}^{s}, \|(f,g)\|_{\mathbf{H}^{s}=1}\}$ is relatively
compact, thus we have
\begin{eqnarray*}
&&\|T-T_{h}\|_{\mathbf{H}^{s}}
=\sup\limits_{(f,g)\in \mathbf{H}^{s}, \|(f,g)\|_{\mathbf{H}^{s}}=1}\|(T-T_{h})(f,g)\|_{\mathbf{H}^{s}}\nonumber\\
&&~~~=\sup\limits_{(f,g)\in \mathbf{H}^{s},
\|(f,g)\|_{\mathbf{H}^{s}}=1}\|(I-P_{h})T(f,g)\|_{\mathbf{H}^{s}}\to
0.
\end{eqnarray*}
The proof is completed.
\end{proof}

\indent The conforming finite element approximation of (\ref{s2.15})
is given by: Find $\lambda_{h}^{*} \in \mathbb{C}$,
$(u_{h}^{*},\omega_{h}^{*})\in
\mathbf{H}_{h}\backslash\{\mathbf{0}\}$ such that
\begin{eqnarray}\label{s3.11}
A((v,z), (u_{h}^{*},\omega_{h}^{*})) =\overline{\lambda_{h}^{*}}
B((v,z), (u_{h}^{*},\omega_{h}^{*})),~~~\forall (v,z)\in
\mathbf{H}_{h}.
\end{eqnarray}

Define the solution operator $T_{h}^{*}: \mathbf{H}^{s}\to
\mathbf{H}_{h}$ satisfying
\begin{eqnarray}\label{s3.12}
A((v,z), T_{h}^{*}(f,g))=B((v,z), (f,g)),~~~\forall~(v,z)\in
\mathbf{H}_{h}.
\end{eqnarray}
 (\ref{s3.11}) has the following equivalent operator form
\begin{eqnarray}\label{s3.13}
T_{h}^{*}(u_{h}^{*},\omega_{h}^{*})=\lambda_{h}^{*-1}(u_{h}^{*},\omega_{h}^{*}).
\end{eqnarray}
\indent It can be proved that $T_{h}^{*}$ is the adjoint operator of
$T_{h}$ in the sense of inner product $A(\cdot,\cdot)$. In fact,
from (\ref{s3.3}) and (\ref{s3.12}) we have
\begin{eqnarray}\label{s3.14}
&&A(T_{h}(f,g),(v, z))=B((f,g),(v,
z))\nonumber\\
&&~~~=A((f,g),T_{h}^{*}(v, z)),~~~\forall (f,g),(v, z)\in
\mathbf{H}_{h}.
\end{eqnarray}
Hence, the primal and dual eigenvalues are connected via
$\lambda_{h}=\overline{\lambda_{h}^{*}}$.\\
 \indent We need the following
lemma (see Lemma 5 on page 1091 of \cite{dunford}).
\begin{lemma}
Let $\|T_{h}-T\|_{\mathbf{H}}\to 0$. Let $\{\lambda_{j}\}$  be  an
enumeration of the eigenvalues of $T$, each repeated according to
its multiplicity. Then there exist enumerations $\{\lambda_{j,h}\}$
of the eigenvalues of $T_{h}$, with repetitions according to
multiplicity, such that $\lambda_{j,h}\to \lambda_{j}$~($j\geq 1$).
 \end{lemma}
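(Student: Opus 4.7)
The plan is to use the Riesz spectral projection approach from analytic perturbation theory. Because $T$ is compact on $\mathbf{H}$ by Theorem 2.2, its nonzero spectrum consists of isolated eigenvalues of finite algebraic multiplicity with $0$ as the only possible accumulation point. I would enumerate the distinct nonzero eigenvalues of $T$ as $\mu_1,\mu_2,\ldots$ with algebraic multiplicities $m_1,m_2,\ldots$, so that the sequence $\{\lambda_j\}$ lists each $\mu_k$ exactly $m_k$ times.

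For each fixed $k$, I would choose a circle $\Gamma_k$ around $\mu_k$ small enough to lie in $\rho(T)$ and to exclude every other eigenvalue of $T$. On the compact set $\Gamma_k$ the resolvent $(z-T)^{-1}$ is uniformly bounded, so the second resolvent identity
\begin{equation*}
(z-T_h)^{-1}-(z-T)^{-1}=(z-T_h)^{-1}(T_h-T)(z-T)^{-1}
\end{equation*}
together with $\|T_h-T\|_{\mathbf{H}}\to 0$ yields, for all sufficiently small $h$, the existence of $(z-T_h)^{-1}$ on $\Gamma_k$ and its uniform convergence to $(z-T)^{-1}$ there. Consequently the spectral projections
\begin{equation*}
E_k=\frac{1}{2\pi i}\oint_{\Gamma_k}(z-T)^{-1}\,dz,\qquad E_{k,h}=\frac{1}{2\pi i}\oint_{\Gamma_k}(z-T_h)^{-1}\,dz
\end{equation*}
satisfy $\|E_{k,h}-E_k\|_{\mathbf{H}}\to 0$. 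The standard fact that two bounded projections at operator-norm distance less than $1$ have ranges of equal dimension then forces $\mathrm{rank}(E_{k,h})=m_k$ for $h$ small. Hence $T_h$ possesses exactly $m_k$ eigenvalues inside $\Gamma_k$ counted with algebraic multiplicity, and shrinking the radius of $\Gamma_k$ shows that these eigenvalues converge to $\mu_k$.

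The real obstacle is to stitch these local statements into a single global enumeration. I would use a diagonal argument: for each $N$, pick $h_N$ small enough that the above holds simultaneously on mutually disjoint disks around $\mu_1,\ldots,\mu_N$. A complementary resolvent argument applied to a contour enclosing exactly $\mu_1,\ldots,\mu_N$ shows that for $h<h_N$ the remaining eigenvalues of $T_h$ must lie outside that contour, hence in any prescribed neighborhood of $\{0\}\cup\{\mu_k:k>N\}$; combined with the uniform bound $\|T_h\|\le\|T\|+1$, this rules out any escape to infinity. For such $h$ I would pair the first $\sum_{k\le N}m_k$ entries of $\{\lambda_{j,h}\}$ with the first $\sum_{k\le N}m_k$ entries of $\{\lambda_j\}$ respecting the disk-by-disk grouping; passing to a diagonal sequence of thresholds then produces enumerations with $\lambda_{j,h}\to\lambda_j$ for every $j$. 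This bookkeeping is exactly what is packaged into the Dunford–Schwartz lemma the authors cite.
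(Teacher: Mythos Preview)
Your sketch is correct, and it is essentially the standard Riesz--projection argument that underlies the Dunford--Schwartz result. Note, however, that the paper does not supply its own proof of this lemma at all: it is stated with the attribution ``see Lemma~5 on page~1091 of \cite{dunford}'' and then simply used. So there is no in-paper argument to compare against; your proposal is in effect a reconstruction of the cited classical proof, and you yourself recognize this in the final sentence.

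One small point worth tightening: your global bookkeeping step uses that the eigenvalues of $T_h$ outside the chosen contour cannot wander---they must lie near $\{0\}\cup\{\mu_k:k>N\}$. This relies on upper semicontinuity of the spectrum under norm perturbation (any $z$ at positive distance from $\sigma(T)$ lies in $\rho(T_h)$ for $h$ small) together with the fact that $T_h$ is itself compact (indeed finite rank here), so that its nonzero spectrum is discrete. You implicitly use both, and it would be worth saying so explicitly; otherwise the argument is sound.
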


 \indent In this paper we suppose that
$\{\lambda_{j}\}$ and $\{\lambda_{j,h}\}$  satisfy the above
lemma, and let $\lambda=\lambda_{k}$ be the kth eigenvalue with the
algebraic multiplicity $q$ and the ascent $\alpha$,
$\lambda_{k}=\lambda_{k+1}=\cdots,\lambda_{k+q-1}$. Since
$\|T_{h}-T\|_{\mathbf{H}}\to 0$, $q$ eigenvalues
$\lambda_{k,h},\cdots,\lambda_{k+q-1,h}$ of (\ref{s3.1}) will
converge to $\lambda$.\\
 \indent Let $E$ be the
spectral projection associated with $T$ and $\lambda$, then
$R(E)=N((\lambda^{-1}-T)^{\alpha})$ is the space of generalized
eigenvectors associated with $\lambda$ and $T$, where $R$ denotes
the range and $N$ denotes the null space. Let $E_{h}$ be the
spectral projection associated with $T_{h}$ and the eigenvalues
$\lambda_{k,h},\cdots,\lambda_{k+q-1,h}$; let $M_{h}(\lambda_{i,h})$
be the space of generalized eigenvectors associated with
$\lambda_{i,h}$ and $T_{h}$, and let
$M_{h}(\lambda)=\sum_{i=k}^{k+q-1}M_{h}(\lambda_{i,h})$,  then
$R(E_{h})=M_{h}(\lambda)$ if $h$ is small enough.
 In view of the
dual problem (\ref{s2.15}) and (\ref{s3.11}), the definitions of
$E^{*}$, $R(E^{*})$, $E_{h}^{*}$, $M_{h}(\lambda_{i,h}^{*})$,
$M_{h}(\lambda^{*})$ and $R(E_{h}^{*})$ are analogous to $E$,
$R(E)$, $E_{h}$, $M_{h}(\lambda_{i,h})$, $M_{h}(\lambda)$ and
$R(E_{h})$ (see
\cite{babuska}).\\
\indent Given two closed subspaces $R$ and $U$, denote
\begin{eqnarray*}
&&\delta(R,U)=\sup\limits_{(u,\omega)\in R\atop\|(u,\omega)\|_{\mathbf{H}}=1}\inf\limits_{(v,z)\in U}\|(u,\omega)-(v,z)\|_{\mathbf{H}},\\
&&\theta(R,U)_{s}=\sup\limits_{(u,\omega)\in
R\atop\|(u,\omega)\|_{\mathbf{H^{s}}}=1}\inf\limits_{(v,z)\in
U}\|(u,\omega)-(v,z)\|_{\mathbf{H^{s}}},~~~s=0,1.
\end{eqnarray*}
We define the gaps between $R(E)$ and $R(E_{h})$ in
$\|\cdot\|_{\mathbf{H}}$ as
\begin{eqnarray*}
 \widehat{\delta}(R(E),R(E_{h}))=\max\{\delta(R(E),R(E_{h})),\delta(R(E_{h}),R(E))\},
\end{eqnarray*}
and in $\|\cdot\|_{\mathbf{H}^{s}}$ as
\begin{eqnarray*}
 \widehat{\theta}(R(E),R(E_{h}))_{s}=\max\{\theta(R(E),R(E_{h}))_{s},\theta(R(E_{h}),R(E))_{s}\}.
\end{eqnarray*}
Define
\begin{eqnarray*}
&&\varepsilon_{h}(\lambda)=\sup\limits_{(u,\omega)\in
R(E)\atop\|(u,\omega)\|_{\mathbf{H}}=1} \inf\limits_{(v,z)\in
\mathbf{H}_{h}}\|(u,\omega)-(v,z)\|_{\mathbf{H}},\\
&&\varepsilon_{h}^{*}(\lambda^{*})=\sup\limits_{(u^{*},\omega^{*})\in
R(E^{*})\atop\|(u^{*},\omega^{*})\|_{\mathbf{H}}=1}
\inf\limits_{(v,z)\in
\mathbf{H}_{h}}\|(u^{*},\omega^{*})-(v,z)\|_{\mathbf{H}}.
\end{eqnarray*}
It follows directly from (C1) that
\begin{eqnarray*}
\varepsilon_{h}(\lambda)\to 0~(h\to
0),~~~\varepsilon_{h}^{*}(\lambda^{*})\to 0~(h\to 0).
\end{eqnarray*}
\noindent Suppose that $R(E), R(E^{*})\subset \mathbf{H}\cap
(H^{2+r}(D))^{2}$ $(r\in (0,2])$, then from (C2) we get
\begin{eqnarray}\label{s3.15}
 \varepsilon_{h}(\lambda)\lesssim h^{r},~~~\varepsilon_{h}^{*}(\lambda^{*})\lesssim h^{r}.
\end{eqnarray}
Further suppose that $R(E),  R(E^{*})\subset (H^{6}(D))^{2}$,
$S^{h}\subset H_{0}^{2}(D)$ is the Argyris finite element space,
then from the interpolation theory we have
\begin{eqnarray}\label{s3.16}
 \varepsilon_{h}(\lambda)\lesssim h^{4},~~~\varepsilon_{h}^{*}(\lambda^{*})\lesssim h^{4}.
\end{eqnarray}
Note that when the functions in $R(E)$ and $R(E^{*})$ are piecewise
smooth (\ref{s3.15}) and (\ref{s3.16}) are also valid.\\
\indent Thanks to \cite{babuska},  we get the following Theorem 3.3.
\begin{theorem}
 Suppose $n\in L^{\infty}(D)$, then
\begin{eqnarray}\label{s3.17}
&&\widehat{\delta}(R(E),
R(E_{h}))\lesssim\varepsilon_{h}(\lambda),\\\label{s3.18}
&&|\lambda-(\frac{1}{q}\sum\limits_{j=k}^{k+q-1}\lambda_{j,h}^{-1})^{-1}|\lesssim
\varepsilon_{h}(\lambda)\varepsilon_{h}^{*}(\lambda^{*}),\\\label{s3.19}
&&|\lambda-\lambda_{j,h}|\lesssim
[\varepsilon_{h}(\lambda)\varepsilon_{h}^{*}(\lambda^{*})]^{\frac{1}{\alpha}},~~~j=k,k+1,\cdots,k+q-1.
\end{eqnarray}
Suppose $(u_{h},\omega_{h})$ with $\|(u_{h},\omega_{h})\|_{A}=1$
is an eigenfunction corresponding to $\lambda_{j,h}$
($j=k,k+1,\cdots,k+q-1$), then there exists an eigenfunction
$(u,\omega)$ corresponding to $\lambda$, such that
\begin{eqnarray}\label{s3.20}
&&\|(u_{h},\omega_{h})-(u,\omega)\|_{\mathbf{H}}\lesssim
\varepsilon_{h}(\lambda)^{\frac{1}{\alpha}}.
\end{eqnarray}
\end{theorem}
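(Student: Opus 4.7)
The plan is to recognize this as a direct application of the Babuska--Osborn spectral approximation theory for compact operators on Hilbert spaces, with all the nontrivial preparatory work already done in Sections 2--3. Specifically, $T:\mathbf{H}\to\mathbf{H}$ is compact (Theorem 2.2), $\|T-T_h\|_{\mathbf{H}}\to 0$ (Theorem 3.1), $A(\cdot,\cdot)$ is a genuine inner product whose induced norm $\|\cdot\|_A$ is equivalent to $\|\cdot\|_{\mathbf{H}}$, and the crucial Galerkin-type identity $T_h=P_hT$ (equation (3.8)) holds with $P_h$ being the $A$-orthogonal Ritz projection. This puts us exactly in the framework of \cite{babuska}, so the four bounds follow by specializing the abstract estimates there.

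For the gap estimate (3.17), I would invoke the Babuska--Osborn bound $\widehat{\delta}(R(E),R(E_h))\lesssim \|(T-T_h)|_{R(E)}\|_{\mathbf{H}}$. Using $T_h=P_hT$, one has $(T-T_h)(u,\omega)=(I-P_h)T(u,\omega)$, and by the best-approximation property of the Ritz projection with respect to $\|\cdot\|_A$ together with the norm equivalence, $\|(I-P_h)T(u,\omega)\|_{\mathbf{H}}\lesssim \inf_{(v,z)\in\mathbf{H}_h}\|T(u,\omega)-(v,z)\|_{\mathbf{H}}$. For $(u,\omega)\in R(E)$ with $\|(u,\omega)\|_{\mathbf{H}}=1$, $T(u,\omega)$ lies in $R(E)$ (since $R(E)$ is $T$-invariant) up to a bounded factor, giving $\|(T-T_h)|_{R(E)}\|_{\mathbf{H}}\lesssim \varepsilon_h(\lambda)$ by definition of $\varepsilon_h(\lambda)$.

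For the eigenvalue estimates (3.18) and (3.19), the Babuska--Osborn framework yields
\begin{eqnarray*}
\Big|\lambda^{-1}-\frac{1}{q}\sum_{j=k}^{k+q-1}\lambda_{j,h}^{-1}\Big|
\lesssim \|(T-T_h)|_{R(E)}\|_{\mathbf{H}}\cdot \|(T^*-T_h^*)|_{R(E^*)}\|_{\mathbf{H}},
\end{eqnarray*}
and $|\lambda^{-1}-\lambda_{j,h}^{-1}|\lesssim (\|(T-T_h)|_{R(E)}\|_{\mathbf{H}}\|(T^*-T_h^*)|_{R(E^*)}\|_{\mathbf{H}})^{1/\alpha}$, where the $1/\alpha$ exponent accounts for the Jordan structure when $\alpha>1$. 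The dual operator $T^*$ satisfies the analogous identity $T_h^*=P_hT^*$ (by taking conjugates in (3.14) and exploiting the selfadjointness of $A$), so the same projection argument applied to $T^*$ gives $\|(T^*-T_h^*)|_{R(E^*)}\|_{\mathbf{H}}\lesssim \varepsilon_h^*(\lambda^*)$. Since $\lambda_{j,h}\to\lambda\ne 0$, the reciprocal map is Lipschitz near $\lambda^{-1}$, so bounds on $|\lambda^{-1}-\lambda_{j,h}^{-1}|$ translate into bounds on $|\lambda-\lambda_{j,h}|$ of the same order. Estimate (3.20) for eigenfunctions is the standard Babuska--Osborn eigenfunction error bound: any $(u_h,\omega_h)\in M_h(\lambda)$ with $\|(u_h,\omega_h)\|_A=1$ lies at $\|\cdot\|_{\mathbf{H}}$-distance at most $\|(T-T_h)|_{R(E)}\|_{\mathbf{H}}^{1/\alpha}\lesssim \varepsilon_h(\lambda)^{1/\alpha}$ from the generalized eigenspace $R(E)$.

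There is no single hard step here; the substance of the proof was the construction performed earlier, namely choosing the product space $\mathbf{H}$ and the sesquilinear forms $A$ and $B$ so that $A$ is coercive and so that $T_h=P_hT$ holds. The only point requiring minor care is to check that the dual side indeed has $T_h^*=P_hT^*$ and $\|T^*-T_h^*\|_{\mathbf{H}}\to 0$, which I would verify once by the same projection/compactness argument used for $T$ in Section 3, so that the symmetric two-sided Babuska--Osborn bounds involving both $\varepsilon_h(\lambda)$ and $\varepsilon_h^*(\lambda^*)$ are available.
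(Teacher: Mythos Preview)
Your proposal is correct and follows essentially the same route as the paper: the paper's proof is even terser, simply noting that $\|T-T_{h}\|_{\mathbf{H}}\to 0$ by Theorem~3.1 and then invoking Theorems~8.1--8.4 of \cite{babuska} directly, whereas you spell out the mechanism (namely $T_h=P_hT$, $T_h^{*}=P_hT^{*}$, and the resulting bounds $\|(T-T_h)|_{R(E)}\|_{\mathbf H}\lesssim\varepsilon_h(\lambda)$, $\|(T^{*}-T_h^{*})|_{R(E^{*})}\|_{\mathbf H}\lesssim\varepsilon_h^{*}(\lambda^{*})$) that those theorems package.
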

\begin{proof}
From Theorem 3.1 we know $\|T-T_{h}\|_{\mathbf{H}}\to 0$ $(h\to
 0)$, thus from Theorem 8.1, Theorem 8.2, Theorem 8.3 and Theorem 8.4
of \cite{babuska} we get the desired results (\ref{s3.17}),
(\ref{s3.18}), (\ref{s3.19}) and (\ref{s3.20}), respectively.
\end{proof}

\indent Next we discuss the error estimates in the $\|\cdot\|_{\mathbf{H}^{s}}$ $(s=0,1)$ norm by using the Aubin-Nitsche technique.\\
\indent We need the following regularity assumption:\\
\indent {\bf R(D)}.~~ For any $\xi\in H^{-s}(D)$ ($s=0,1$), there
exists $\psi\in H^{2+r_{s}}(D)$ satisfying
\begin{eqnarray*}
\Delta(\frac{1}{n-1}\Delta \psi)=\xi,~in~D;~~~\psi=\frac{\partial
\psi}{\partial \nu}=0~ on~\partial D,
\end{eqnarray*}
and
\begin{eqnarray}\label{s3.21}
\|\psi\|_{2+r_{s}}\leq C_{p} \|\xi\|_{-s},~~~s=0,1
\end{eqnarray}
where $r_{1}\in (0,1]$, $r_{0}\in (0,2]$, $C_{p}$ denotes the prior
constant dependent on $n(x)$ and $D$ but
independent of the right-hand side $\xi$ of the equation.\\
 \indent It is easy to see that (\ref{s3.21}) is valid with $r_{s}=2-s$
  when $n$ and $\partial D$ are appropriately smooth.
When $n$ is a constant and $D\subset \mathbb{R}^{2}$   is a convex
polygon, from Theorem 2 in \cite{blum} we can get $r_{1}=1$ and if
the inner angle at each critical boundary point is smaller than
$126.283696...^0$ then $r_{0}=2$.\\
\indent Consider the auxiliary boundary value problem: Find $ \phi_{1}\in H^2_0(D)$ such that
\begin{eqnarray}\label{s3.22}
&&\Delta(\frac{1}{n-1}\Delta \phi_{1})=-\Delta
(u-P_{h}^{1}u),~~~in~D,\\\label{s3.23}
 &&\phi_{1}=\frac{\partial
\phi_{1}}{\partial \nu}=0,~~~ on~\partial D.
\end{eqnarray}
Let R(D) hold, then
\begin{eqnarray*}
\|\phi_{1}\|_{2+r_{1}}\lesssim \|\Delta(u-P_{h}^{1}u)\|_{-1}\lesssim
\|u-P_{h}^{1}u\|_{1}.
\end{eqnarray*}
\indent Consider the auxiliary boundary value problem:
\begin{eqnarray}\label{s3.24}
&&\Delta(\frac{1}{n-1}\Delta
\phi_{2})=u-P_{h}^{1}u,~~~in~D,\\\label{s3.25}
 &&\phi_{2}=\frac{\partial
\phi_{2}}{\partial \nu}=0,~~~ on~\partial D.
\end{eqnarray}
Let R(D) hold, then
\begin{eqnarray*}
\|\phi_{2}\|_{2+r_{0}}\lesssim \|u-P_{h}^{1}u\|_{0}.
\end{eqnarray*}

\begin{lemma}
Suppose that   R(D) and (C2) are valid $(s=0,1)$, then
 for $(u, \omega)\in \mathbf{H}$,
\begin{eqnarray}\label{s3.26}
\|(u, \omega)-P_{h}(u,\omega)\|_{\mathbf{H}^{s}} \lesssim
h^{r_{s}}\| (u,\omega)-P_{h}(u,\omega)\|_{\mathbf{H}},~~~s=0,1.
\end{eqnarray}
\end{lemma}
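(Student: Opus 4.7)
The plan is to prove the bound separately for each component of the pair, since the $\mathbf{H}^s$-norm is a sum $\|u-P_h^1u\|_s+\|\omega-P_h^2\omega\|_{s-2}$ and the Ritz projection $P_h$ decouples into the independent projections $P_h^1$ and $P_h^2$. For the first component I would run an Aubin--Nitsche argument driven by the auxiliary problems (\ref{s3.22})--(\ref{s3.23}) and (\ref{s3.24})--(\ref{s3.25}); for the second I would exploit the fact that $P_h^2$ is exactly the $L^2$-orthogonal projection and use the approximation property of $S^h$ in the usual duality way.

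For the first component, take $s\in\{0,1\}$ and let $\phi_s$ denote $\phi_1$ when $s=1$ and $\phi_2$ when $s=0$, so that the right-hand side of the auxiliary PDE is chosen to produce $\|u-P_h^1u\|_s^2$ after testing. Using the weak form of the auxiliary problem, test against $v=u-P_h^1u\in H_0^2(D)$; two integrations by parts (the boundary terms vanish because $u-P_h^1u\in H_0^2(D)$) convert the right-hand side into $\|u-P_h^1u\|_s^2$ (a norm-squared up to Poincar\'e equivalence on $H_0^1$ when $s=1$), while the left-hand side becomes $(\frac{1}{n-1}\Delta\phi_s,\Delta(u-P_h^1u))_0$. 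Because $\frac{1}{n-1}$ is real, this equals the conjugate of $(\frac{1}{n-1}\Delta(u-P_h^1u),\Delta\phi_s)_0$, and the defining orthogonality (\ref{s3.5}) lets me subtract any $\chi\in S^h$ from $\phi_s$. Choosing $\chi$ to be the finite element interpolant of $\phi_s$, applying Cauchy--Schwarz, and invoking (C2) together with the regularity bound $\|\phi_s\|_{2+r_s}\lesssim\|u-P_h^1u\|_s$ from R(D) produces
\begin{eqnarray*}
\|u-P_h^1u\|_s^2\lesssim \|u-P_h^1u\|_2\,h^{r_s}\|\phi_s\|_{2+r_s}\lesssim h^{r_s}\|u-P_h^1u\|_2\,\|u-P_h^1u\|_s,
\end{eqnarray*}
and cancelling one factor of $\|u-P_h^1u\|_s$ yields the desired $h^{r_s}$-estimate.

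For the second component, the $L^2$-orthogonality $(\omega-P_h^2\omega,\chi)_0=0$ for all $\chi\in S^h$ gives, by the definition of the negative norm,
\begin{eqnarray*}
\|\omega-P_h^2\omega\|_{s-2}=\sup_{0\ne v\in H_0^{2-s}(D)}\frac{|(\omega-P_h^2\omega,v-\chi)_0|}{\|v\|_{2-s}}\le \|\omega-P_h^2\omega\|_0\inf_{\chi\in S^h}\frac{\|v-\chi\|_0}{\|v\|_{2-s}}.
\end{eqnarray*}
Since $S^h$ contains piecewise polynomials of degree at least three, a Scott--Zhang--type interpolant supplies $\|v-\chi\|_0\lesssim h^{2-s}\|v\|_{2-s}$, and the inequalities $h^{2-s}\le h^{r_s}$ (valid for $h$ small because $r_s\le 2-s$) upgrade this to the required $h^{r_s}$ factor. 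Summing the two component bounds and identifying $\|u-P_h^1u\|_2+\|\omega-P_h^2\omega\|_0$ as $\|(u,\omega)-P_h(u,\omega)\|_{\mathbf{H}}$ finishes the proof.

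The only subtle step is the Nitsche argument for the first component: one must make sure that the regularity constant coming from R(D) carries precisely the norm $\|u-P_h^1u\|_s$ so that, after one $h^{r_s}$ is extracted via (C2) and one factor of $\|u-P_h^1u\|_s$ is absorbed on the left, the clean bound in $\|u-P_h^1u\|_2$ remains. The $\omega$-part is routine once one notes that $P_h^2$ is the $L^2$-projection onto a space rich enough to admit $O(h^{2-s})$ approximation in $L^2$ for test functions in $H_0^{2-s}(D)$.
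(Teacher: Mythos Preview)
Your proposal is correct and follows essentially the same route as the paper: an Aubin--Nitsche duality argument for $u-P_h^1u$ based on the auxiliary problems (\ref{s3.22})--(\ref{s3.25}), and a negative-norm duality estimate for $\omega-P_h^2\omega$ using the $L^2$-orthogonality of $P_h^2$. The only cosmetic differences are that the paper subtracts $P_h^1\phi_s$ rather than an interpolant (both are bounded by (C2)), and in the $\omega$-component it inserts $P_h^2\xi$ where you use a Scott--Zhang interpolant; the resulting $h^{2-s}\lesssim h^{r_s}$ step and the final summation are identical.
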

\begin{proof}
The weak form of (\ref{s3.22})-(\ref{s3.23}) is
\begin{eqnarray*}
&&(\Delta v, \frac{1}{n-1}\Delta \phi_{1})_{0}=(\nabla v, \nabla
(u-P_{h}^{1}u))_{0},~~~\forall v\in H_{0}^{2}(D).
\end{eqnarray*}
Let $v=u-P_{h}^{1}u$, then
\begin{eqnarray}\label{s3.27}
(\Delta (u-P_{h}^{1}u), \frac{1}{n-1}\Delta
(\phi_{1}-P_{h}^{1}\phi_{1}))_{0}=(\nabla (u-P_{h}^{1}u), \nabla
 (u-P_{h}^{1}u))_{0},
\end{eqnarray}
which leads to
\begin{eqnarray*}
&&\|\nabla
(u-P_{h}^{1}u)\|_{0}^{2}\lesssim\|\Delta(u-P_{h}^{1}u)\|_{0}
\|\Delta
(\phi_{1}-P_{h}^{1}\phi_{1})\|_{0}\\
&&~~~\lesssim\|\Delta(u-P_{h}^{1}u)\|_{0} h^{r_{1}}\|
u-P_{h}^{1}u\|_{1},
\end{eqnarray*}
thus
\begin{eqnarray}\label{s3.28}
\|u-P_{h}^{1}u\|_{1} \lesssim h^{r_{1}}\| u-P_{h}^{1}u\|_{2}.
\end{eqnarray}
The weak form of (\ref{s3.24})-(\ref{s3.25}) is
\begin{eqnarray*}
&&(\Delta v, \frac{1}{n-1}\Delta \phi_{2})_{0}=( v,
u-P_{h}^{1}u)_{0},~~~\forall v\in H_{0}^{2}(D).
\end{eqnarray*}
Let $v=u-P_{h}^{1}u$, then
\begin{eqnarray}\label{s3.29}
(\Delta (u-P_{h}^{1}u), \frac{1}{n-1}\Delta
(\phi_{2}-P_{h}^{1}\phi_{2}))_{0}=( u-P_{h}^{1}u, u-P_{h}^{1}u)_{0},
\end{eqnarray}
thus
\begin{eqnarray*}
\|u-P_{h}^{1}u\|_{0}^{2}\lesssim\|\Delta (u-P_{h}^{1}u)\|_{0}
\|\Delta (\phi_{2}-P_{h}^{1}\phi_{2})\|_{0} \lesssim
\|u-P_{h}^{1}u\|_{2} h^{r_{0}}\|u-P_{h}^{1}u\|_{0}.
\end{eqnarray*}
and
\begin{eqnarray}\label{s3.30}
&&\|u-P_{h}^{1}u\|_{0}\lesssim h^{r_{0}}\|u-P_{h}^{1}u\|_{2}.
\end{eqnarray}
Combining (\ref{s3.28}) and (\ref{s3.30}) we get
\begin{eqnarray}\label{s3.31}
\|u-P_{h}^{1}u\|_{s} \lesssim h^{r_{s}}\|
u-P_{h}^{1}u\|_{2},~~~s=0,1.
\end{eqnarray}
By the definition of the negative norm,
\begin{eqnarray}\nonumber
&& \|\omega-P_{h}^{2}\omega\|_{s-2}=\sup\limits_{\xi\in
H_{0}^{2-s}(D)}\frac{(\omega-P_{h}^{2}\omega,
\xi)_{0}}{\|\xi\|_{2-s}}\\\label{s3.32}
 &&~~~\lesssim\sup\limits_{\xi\in
H_{0}^{2-s}(D)}\frac{\|\omega-P_{h}^{2}\omega\|_{0}
\|\xi-P_{h}^{2}\xi\|_{0}}{\|\xi\|_{2-s}}\lesssim
h^{2-s}\|\omega-P_{h}^{2}\omega\|_{0},~~~s=0,1.
\end{eqnarray}
Combining (\ref{s3.31}) with (\ref{s3.32}) we deduce that
\begin{eqnarray*}
&&\|(u, \omega)-P_{h}(u,\omega)\|_{\mathbf{H}^{s}} =\|(u
-P_{h}^{1}u, \omega-P_{h}^{2}\omega)\|_{\mathbf{H}^{s}}\nonumber\\
&&~~~\lesssim \|u -P_{h}^{1}u\|_{s}+
\|\omega-P_{h}^{2}\omega\|_{s-2} \lesssim h^{r_{s}}\|
u-P_{h}^{1}u\|_{2}+h^{2-s}\|\omega-P_{h}^{2}\omega\|_{0}\nonumber\\
&&~~~\lesssim (h^{r_{s}}+h^{2-s})\|
(u,\omega)-P_{h}(u,\omega)\|_{\mathbf{H}},~~s=0,1,
\end{eqnarray*}
noting that $h^{2-s}\lesssim h^{r_{s}}$, we obtain (\ref{s3.26}).
\end{proof}

\begin{theorem}
Suppose that R(D) and $(C2)$ are valid, and $n \in
W^{2-s,\infty}(D)$ ($s=0,1$). Then
\begin{eqnarray}\label{s3.33}
\widehat{\theta}(R(E), R(E_{h}))_{s}
 \lesssim h^{r_{s}}\varepsilon_{h}(\lambda),~~~s=0,1.
\end{eqnarray}
Let $(u_{h},\omega_{h})$ with $\|(u_{h},\omega_{h})\|_{A}=1$ be an
eigenfunction corresponding to $\lambda_{h}$, then there exists
eigenfunction $(u,\omega)$ corresponding to $\lambda$, such that
\begin{eqnarray}\label{s3.34}
\|(u_{h},\omega_{h})-(u,\omega)\|_{\mathbf{H}^{s}}\lesssim
(h^{r_{s}}\varepsilon_{h}(\lambda))^{\frac{1}{\alpha}},~~~s=0,1;
\end{eqnarray}
further let $\alpha=1$, then
\begin{eqnarray}\label{s3.35}
\|(u_{h},\omega_{h})-(u,\omega)\|_{\mathbf{H}^{s}}\lesssim h^{r_{s}}
\|(u_{h},\omega_{h})-(u,\omega)\|_{\mathbf{H}},~~~s=0,1.
\end{eqnarray}
\end{theorem}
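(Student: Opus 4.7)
The plan is to apply Babuska--Osborn spectral approximation in the weaker $\mathbf{H}^s$ topology and extract the factor $h^{r_s}$ from the Aubin--Nitsche-type inequality in Lemma 3.1. The required ingredients are already in place: Theorem 2.2 gives compactness of $T$ on $\mathbf{H}^s$ when $n\in W^{2-s,\infty}(D)$, and Theorem 3.1 gives $\|T-T_h\|_{\mathbf{H}^s}\to 0$, so the abstract Babuska--Osborn machinery applies directly in $\mathbf{H}^s$ with the same spectral projections $E,E_h$.

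For (3.33), I would first use the identity $T_h = P_h T$ from (3.8), so that on the $T$-invariant subspace $R(E)$, $(T-T_h)(u,\omega) = (I-P_h)T(u,\omega)$ with $T(u,\omega)\in R(E)$. Applying Lemma 3.1 to $T(u,\omega)$, followed by Ritz optimality and the definition of $\varepsilon_h(\lambda)$, yields
\begin{eqnarray*}
\|(T-T_h)(u,\omega)\|_{\mathbf{H}^s}
\lesssim h^{r_s}\|(I-P_h)T(u,\omega)\|_{\mathbf{H}}
\lesssim h^{r_s}\varepsilon_h(\lambda)\|(u,\omega)\|_{\mathbf{H}^s},
\end{eqnarray*}
where the last step exploits norm equivalence on the finite-dimensional space $R(E)$ together with boundedness of $T$. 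The symmetric argument for $T^*-T_h^*$ on $R(E^*)$ combined with the abstract gap estimate (Theorem 7.1 of \cite{babuska}) then delivers $\widehat{\theta}(R(E),R(E_h))_s\lesssim h^{r_s}\varepsilon_h(\lambda)$.

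The estimate (3.34) will follow by inserting (3.33) into the standard ascent-$\alpha$ eigenfunction lemma (Theorem 8.4 of \cite{babuska}) transplanted to $\mathbf{H}^s$, which produces the $1/\alpha$-th root. For (3.35) with $\alpha=1$, I would pick $(u,\omega)\in R(E)$ close to $(u_h,\omega_h)$ and split
\begin{eqnarray*}
(u,\omega)-(u_h,\omega_h) = \bigl[(u,\omega)-P_h(u,\omega)\bigr] + \bigl[P_h(u,\omega)-(u_h,\omega_h)\bigr].
\end{eqnarray*}
The first piece is handled by Lemma 3.1 together with Ritz optimality and $(u_h,\omega_h)\in\mathbf{H}_h$: it is $\lesssim h^{r_s}\|(u,\omega)-P_h(u,\omega)\|_{\mathbf{H}} \lesssim h^{r_s}\|(u,\omega)-(u_h,\omega_h)\|_{\mathbf{H}}$. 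For the second piece, I would exploit the operator identities $(u,\omega)=\lambda T(u,\omega)$ and $(u_h,\omega_h)=\lambda_h P_hT(u_h,\omega_h)$ to rewrite it as $\lambda P_hT[(u,\omega)-(u_h,\omega_h)] + (\lambda-\lambda_h)P_hT(u_h,\omega_h)$ and then bound each term in $\mathbf{H}^s$ using the regularizing action of $T$, Lemma 3.1, and the eigenvalue convergence (3.19).

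The main obstacle is controlling $\|P_h(u,\omega)-(u_h,\omega_h)\|_{\mathbf{H}^s}$ in the proof of (3.35): since this difference lies in $\mathbf{H}_h$, it a priori offers no Nitsche improvement by itself. The resolution rests on the ascent-$1$ hypothesis, which rules out Jordan-block blow-up, and on applying Lemma 3.1 not to the discrete difference directly but to its $T$-image, which inherits the additional regularity of $R(E)$ and thereby admits the desired $h^{r_s}$ gain. Combining this with the sharp eigenvalue estimate (3.19) (which is $O(\varepsilon_h(\lambda))$ when $\alpha=1$) and a short bootstrap will close the argument.
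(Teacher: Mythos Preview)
Your plan for (\ref{s3.33}) and (\ref{s3.34}) is essentially the paper's argument: the paper also uses $T_h=P_hT$, norm equivalence on the finite-dimensional space $R(E)$, Lemma~3.1, and then cites Theorems~7.1 and~7.4 of \cite{babuska} in the $\mathbf{H}^s$ topology. (The dual restriction $\|(T^*-T_h^*)|_{R(E^*)}\|$ that you mention is not needed for the gap estimate; Theorem~7.1 in \cite{babuska} requires only $\|(T-T_h)|_{R(E)}\|_{\mathbf{H}^s}$.)

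For (\ref{s3.35}) your route diverges from the paper's and runs into a genuine difficulty. The paper avoids any splitting: for $\alpha=1$ it invokes directly the pointwise bound
\[
\|(u_h,\omega_h)-(u,\omega)\|_{\mathbf{H}^s}\lesssim \|(T-T_h)(u,\omega)\|_{\mathbf{H}^s},
\]
and since $T(u,\omega)=\lambda^{-1}(u,\omega)$ this equals $|\lambda|^{-1}\|(I-P_h)(u,\omega)\|_{\mathbf{H}^s}$; Lemma~3.1 and Ritz optimality (using $(u_h,\omega_h)\in\mathbf{H}_h$) then give (\ref{s3.35}) in two lines. Your decomposition instead produces the discrete piece $P_h(u,\omega)-(u_h,\omega_h)$, and the rewriting as $\lambda T_h[(u,\omega)-(u_h,\omega_h)]+(\lambda-\lambda_h)T_h(u_h,\omega_h)$ does not close: neither term carries an intrinsic $h^{r_s}$ factor relative to $\|(u,\omega)-(u_h,\omega_h)\|_{\mathbf{H}}$. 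The eigenvalue term, via (\ref{s3.19}) with $\alpha=1$, gives only $|\lambda-\lambda_h|\lesssim \varepsilon_h(\lambda)\varepsilon_h^*(\lambda^*)$, and turning this into $h^{r_s}\|(u,\omega)-(u_h,\omega_h)\|_{\mathbf{H}}$ would require both $\varepsilon_h^*(\lambda^*)\lesssim h^{r_s}$ (a regularity statement on $R(E^*)$ not among the hypotheses of Theorem~3.5) and $\varepsilon_h(\lambda)\lesssim \|(u,\omega)-(u_h,\omega_h)\|_{\mathbf{H}}$ (the reverse of what (\ref{s3.20}) provides). The ``$T$-image'' idea you sketch does not supply the missing gain, because $T(u_h,\omega_h)\notin R(E)$ in general, so no finite-dimensional norm equivalence is available. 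The fix is simply to use the paper's one-step abstract bound for $\alpha=1$ and bypass the split altogether.
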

\begin{proof}
Note that in $R(E)$ the norm $\|\cdot\|_{\mathbf{H}^{s}}$ is
equivalent to the norm $\|\cdot\|_{\mathbf{H}}$, and $TR(E)\subset
R(E)$, by (\ref{s3.26}) we deduce that
\begin{eqnarray}\label{s3.36}
&& \|(T-T_{h})|_{R(E)}\|_{\mathbf{H}^{s}}=
\sup\limits_{(u,\omega)\in R(E)}
\frac{\|T(u,\omega)-T_{h}(u,\omega)\|_{\mathbf{H}^{s}}}{\|(u,\omega)\|_{\mathbf{H}^{s}}}\nonumber\\
&&~~~\lesssim \sup\limits_{(u,\omega)\in R(E)}
\frac{\|T(u,\omega)-P_{h}T(u,\omega)\|_{\mathbf{H}^{s}}}{\|(u,\omega)\|_{\mathbf{H}}}\nonumber\\
&&~~~\lesssim h^{r_{s}}\sup\limits_{(u,\omega)\in R(E)}
\frac{\|T(u,\omega)-P_{h}T(u,\omega)\|_{\mathbf{H}}}{\|(u,\omega)\|_{\mathbf{H}}}\nonumber\\
&&~~~=h^{r_{s}}\sup\limits_{(u,\omega)\in R(E)}
\frac{\|T(u,\omega)-P_{h}T(u,\omega)\|_{\mathbf{H}}}{\|T(u,\omega)\|_{\mathbf{H}}}
\frac{\|T(u,\omega)\|_{\mathbf{H}}}{\|(u,\omega)\|_{\mathbf{H}}}\lesssim
h^{r_{s}}\varepsilon_{h}(\lambda).
\end{eqnarray}
Thanks to Theorem 3.1, Theorem 7.1 in \cite{babuska} and
(\ref{s3.36}) we deduce
\begin{eqnarray*}
\widehat{\theta}(R(E), R(E_{h}))_{s}\lesssim
\|(T-T_{h})|_{R(E)}\|_{\mathbf{H}^{s}}\lesssim
h^{r_{s}}\varepsilon_{h}(\lambda).
\end{eqnarray*}
And from Theorem 7.4 in \cite{babuska} we get
\begin{eqnarray*}
\|(u_{h},\omega_{h})-(u,\omega)\|_{\mathbf{H}^{s}}\lesssim
\|(T-T_{h})|_{R(E)}\|_{\mathbf{H}^{s}}^{\frac{1}{\alpha}}\lesssim
(h^{r_{s}}\varepsilon_{h}(\lambda))^{\frac{1}{\alpha}}.
\end{eqnarray*}
When $\alpha=1$, we have
\begin{eqnarray*}
\|(u_{h},\omega_{h})-(u,\omega)\|_{\mathbf{H}^{s}}\lesssim
\|(T-T_{h})(u,\omega)\|_{\mathbf{H}^{s}}\lesssim
\|(u,\omega)-P_{h}(u,\omega)\|_{\mathbf{H}^{s}},
\end{eqnarray*}
thus from (\ref{s3.26}) and
$\|(u,\omega)-P_{h}(u,\omega)\|_{\mathbf{H}}\lesssim
\|(u,\omega)-(u_{h},\omega_{h})\|_{\mathbf{H}}$ we obtain
(\ref{s3.35}). This completes the proof.
\end{proof}

\indent {\bf Remark 3.1.}~~Using the same argument as in this
section
 we can prove the error estimates of finite element approximation for the dual problem (\ref{s2.15}),
for example, there hold the following two estimates
\begin{eqnarray}\label{s3.37}
&&\|(u_{h}^{*},\omega_{h}^{*})-(u^{*},\omega^{*})\|_{\mathbf{H}}\lesssim
\varepsilon_{h}^{*}(\lambda^{*})^{\frac{1}{\alpha}},\\\label{s3.38}
&&\|(u_{h}^{*},\omega_{h}^{*})-(u^{*},\omega^{*})\|_{\mathbf{H}^{s}}\lesssim
(h^{r_{s}}\varepsilon_{h}^{*}(\lambda^{*}))^{\frac{1}{\alpha}},~~~s=0,1.
\end{eqnarray}

\section{Two grid discretization scheme}

\indent In this section we use the two grid discretization scheme to treat
transmission eigenvalues problem.\\
 \indent{\bf Definition 4.1.}~~ $\forall$ $(v,z),(v^{*},z^{*})\in \mathbf{H}
$, $B((v,z),(v^{*},z^{*}))\neq 0$, define
$
\frac{A((v,z),(v^{*},z^{*}))}{B((v,z),(v^{*},z^{*}))}
$
as the generalized Rayleigh quotient of $(v,z)$ and $(v^{*},z^{*})$.\\
\indent We now outline the two grid discretization scheme.\\
\indent{\bf Scheme 4.1.}~~Two grid discretization scheme.\\
\indent {\bf Step 1.} Solve (\ref{s3.1}) on a coarse grid $\pi_{H}$:
Find $\lambda_{H}\in \mathbb{C}, (u_{H},\omega_{H})\in
\mathbf{H}_{H}$ such that $\|(u_{H},\omega_{H})\|_{{A}}=1$ and
\begin{eqnarray}\label{s4.1y}
A((u_{H},\omega_{H}),(v,z)) =\lambda_{H}
B((u_{H},\omega_{H}),(v,z)),~~~\forall (v,z)\in \mathbf{H}_{H},
\end{eqnarray}
and find $(u_{H}^{*}, \omega_{H}^{*})\in
M_{H}(\lambda^*)$ according to Lemma 4.1 and Remark 4.1.\\
\indent {\bf Step 2.} Solve two linear boundary value problems on a
fine grid $\pi_{h}$: Find $(u^h,\omega^{h})\in \mathbf{H}_{h}$ such
that
\begin{eqnarray*}
A((u^h,\omega^{h}),
(v,z))=\lambda_{H}B((u_{H},\omega_{H}),(v,z)),~~~\forall (v,z)\in
\mathbf{H}_{h},
\end{eqnarray*}
and find $(u^{h*},\omega^{h*})\in \mathbf{H}_{h}$ such that
\begin{eqnarray*}
A((v,z), (u^{h*},\omega^{h*}))=\lambda_{H}
B((v,z),(u_{H}^{*},\omega_{H}^{*})),~~~\forall (v,z)\in
\mathbf{H}_{h}.
\end{eqnarray*}
\indent {\bf Step 3.} Compute the generalized Rayleigh quotient
$
 \lambda^{h}=\frac{A((u^h,\omega^{h}),(u^{h*},\omega^{h*}))}{ B((u^h,\omega^{h}),(u^{h*},\omega^{h*}))}.
$
\\\\
\indent A basic condition to the two grid discretization scheme is
that $|B((u_{H},\omega_{H}), (u_{H}^{*},\omega_{H}^{*}))|$ has a
positive lower bound uniformly with respect to $H$ (see, e.g.,
Theorem 3.5 in \cite{kolman}). The following Lemma 4.1 guarantees
this condition. Using this condition, in the following Theorem 4.3
we can also prove $|B((u^h,\omega^{h}),(u^{h*},\omega^{h*}))|$ has a
positive lower bound uniformly with respect to $h$.

\begin{lemma}
Suppose that $\lambda_{H}=\lambda_{j,H}$ $(j=k,k+1,\cdots,k+q-1)$,
and $(u_{H},\omega_{H})$ is an eigenfunction corresponding to
$\lambda_{H}$.
 Let
$(u_{H}^{-},\omega_{H}^{-})$ be the orthogonal projection of
$(u_{H},\omega_{H})$ to $M_{H}(\lambda^*)$ in the sense of inner
product $A(\cdot,\cdot)$, and let
\begin{eqnarray}\label{s4.1}
(u_{H}^{*},\omega_{H}^{*})=\frac{(u_{H}^{-},\omega_{H}^{-})}{\|(u_{H}^{-},\omega_{H}^{-})\|_{A}}.
\end{eqnarray}
Then when $H$ is small enough $|B((u_{H},\omega_{H}),
(u_{H}^{*},\omega_{H}^{*}))|$ has a positive lower bound uniformly
with respect to $H$.
\end{lemma}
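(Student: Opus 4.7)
The plan is to reduce the sought lower bound on $|B((u_H,\omega_H),(u_H^*,\omega_H^*))|$ to a lower bound on $\|(u_H^-,\omega_H^-)\|_A$, and then prove the latter by a compactness argument driven by the spectral approximation results already at hand (Theorem 3.1, Theorem 3.3, and the dual analogues in Remark 3.1). Since $(u_H^*,\omega_H^*)\in M_H(\lambda^*)\subset \mathbf{H}_H$, I would test the discrete eigenvalue equation (\ref{s4.1y}) with $(v,z)=(u_H^*,\omega_H^*)$ to obtain
\begin{eqnarray*}
A((u_H,\omega_H),(u_H^*,\omega_H^*))=\lambda_H\,B((u_H,\omega_H),(u_H^*,\omega_H^*)).
\end{eqnarray*}
The defining property of the $A$-orthogonal projection $(u_H^-,\omega_H^-)$ together with the normalization (\ref{s4.1}) gives
\begin{eqnarray*}
A((u_H,\omega_H),(u_H^*,\omega_H^*))=\frac{A((u_H,\omega_H),(u_H^-,\omega_H^-))}{\|(u_H^-,\omega_H^-)\|_A}=\|(u_H^-,\omega_H^-)\|_A.
\end{eqnarray*}
Since $\lambda_H\to\lambda$ is bounded above and away from zero, the task reduces to proving $\|(u_H^-,\omega_H^-)\|_A\ge c>0$ uniformly in $H$.

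I would prove this lower bound by contradiction. Suppose $\|(u_{H_k}^-,\omega_{H_k}^-)\|_A\to 0$ along a subsequence $H_k\to 0$. Because $(u_H,\omega_H)\in R(E_H)$ and the gap estimate (\ref{s3.17}) yields $\widehat{\delta}(R(E),R(E_H))\to 0$, after passing to a further subsequence I can produce $(u,\omega)\in R(E)$ with $\|(u,\omega)\|_A=1$ such that $(u_{H_k},\omega_{H_k})\to (u,\omega)$ in the $A$-norm (using that $R(E)$ is finite dimensional). The dual form of Theorem 3.3, noted in Remark 3.1, gives the analogous gap convergence $\widehat{\delta}(R(E^*),R(E_H^*))\to 0$, i.e.\ $M_H(\lambda^*)\to R(E^*)$.

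The main technical step, and the one I expect to demand the most care, is to promote this gap convergence of subspaces into operator-norm convergence of the associated $A$-orthogonal projections $P_H^A$ (onto $M_H(\lambda^*)$) towards the $A$-orthogonal projection $P^{*,A}$ onto $R(E^*)$. For finite-dimensional subspaces of equal dimension embedded in the Hilbert space $(\mathbf{H},A(\cdot,\cdot))$ this is a standard perturbation-theoretic fact, but one must carefully verify that everything is done with respect to the $A$-inner product (as opposed to any fixed $L^2$ or $\mathbf{H}$-pairing) and use that $\dim M_H(\lambda^*)=\dim R(E^*)=q$ for $H$ small. Once this is in hand, passing to the limit in $(u_{H_k}^-,\omega_{H_k}^-)=P_{H_k}^A(u_{H_k},\omega_{H_k})$ yields $P^{*,A}(u,\omega)=0$, i.e.\ $(u,\omega)$ is $A$-orthogonal to $R(E^*)$.

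To close the contradiction I would exploit the non-degeneracy of the $A$-pairing between $R(E)$ and $R(E^*)$: identity (\ref{s2.18}) states that $T^*$ is the $A$-adjoint of $T$, whence the spectral projection $E^*$ is the $A$-adjoint of $E$ and therefore $R(E^*)^{\perp_A}=\ker(E)$. Combined with $(u,\omega)\in R(E)$ this forces $(u,\omega)=E(u,\omega)=0$, contradicting $\|(u,\omega)\|_A=1$. This would complete the proof, and I would record the explicit consequence $|B((u_H,\omega_H),(u_H^*,\omega_H^*))|\ge c/|\lambda_H|\ge c'>0$ for $H$ small enough, for later use in Theorem 4.3.
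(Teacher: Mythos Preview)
Your proposal is correct, and the reduction step---testing (\ref{s4.1y}) with $(u_H^*,\omega_H^*)$ to obtain $|B((u_H,\omega_H),(u_H^*,\omega_H^*))|=|\lambda_H|^{-1}\|(u_H^-,\omega_H^-)\|_A$---coincides exactly with what the paper does. The difference lies in how the lower bound on $\|(u_H^-,\omega_H^-)\|_A$ is obtained.

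You argue by contradiction and compactness: if $\|(u_{H_k}^-,\omega_{H_k}^-)\|_A\to 0$, the gap estimates force a subsequential limit $(u,\omega)\in R(E)$ that is $A$-orthogonal to $R(E^*)$, and then the identity $R(E^*)^{\perp_A}=\ker E$ (from $E^*$ being the $A$-adjoint of $E$) yields the contradiction. The paper instead gives a direct, constructive argument: it builds an explicit element $(\widetilde u,\widetilde\omega)\in R(E^*)$ as the $A$-Riesz representer of $f((v,z))=A(E(v,z),(u',\omega'))$ with $(u',\omega')=E(u_H,\omega_H)/\|E(u_H,\omega_H)\|_A$, pushes it into $M_H(\lambda^*)$ via the spectral projection $E_H^*$, and normalizes so that its $A$-pairing with $(u_H,\omega_H)$ equals $1$. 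Cauchy--Schwarz then gives $\|(u_H^-,\omega_H^-)\|_A\ge 1/\|(u_H',\omega_H')\|_A\ge 1/C_0$, with $C_0$ explicit in terms of $\|E\|_A$ and the convergence $\|(E_H^*-E^*)|_{R(E^*)}\|_A\to 0$.

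The paper's route is quantitatively sharper (it yields the concrete bound $1/(C_0|\lambda_H|)$ rather than mere existence) and avoids the subsequence extraction and the verification that orthogonal projections onto $M_H(\lambda^*)$ converge in operator norm. Your route is more conceptual and reusable---it makes transparent that the key structural fact is the nondegeneracy of the $A$-pairing between $R(E)$ and $R(E^*)$---but the step promoting gap convergence to $A$-orthogonal-projection convergence, while standard for equal-dimensional finite-dimensional subspaces, does need to be written out carefully. Either argument is acceptable for the purposes of Theorem~4.3.
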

\begin{proof}
 Define $f((v,z))=A(E(v,z), (u',\omega'))$, where
$(u',\omega')=\frac{E(u_{H},\omega_{H})}{\|E(u_{H},\omega_{H})\|_{A}}$.
Since for any $(v,z)\in \mathbf{H}$
\begin{eqnarray*}
|f((v,z))|\leq \|E(v,z)\|_{A}\leq \|E\|_{A}\|(v,z)\|_{A},
\end{eqnarray*}
$f$ is a linear and bounded functional on $\mathbf{H}$ and
$\|f\|_{A}\leq \|E\|_{A}$. Using Riesz Theorem, we know  there
exists $(\widetilde{u},\widetilde{\omega})\in \mathbf{H}$ satisfying
\begin{eqnarray*}
A((v,z), (\widetilde{u}, \widetilde{\omega}))=A(E(v,z), (u',
\omega')),~~~\|(\widetilde{u}, \widetilde{\omega})\|_{A}\leq  \|E\|_A .
\end{eqnarray*}
Now, for any $ (v,z)\in \mathbf{H}$,
\begin{eqnarray*}
&&A((v,z),(\lambda^{*-1}-T^{*})^{\alpha}(\widetilde{u}, \widetilde{\omega})
)=A((\lambda^{-1}-T)^{\alpha}(v,z),(\widetilde{u}, \widetilde{\omega}) )\\
&&~~~=A(E(\lambda^{-1}-T)^{\alpha}(v,z),(u', \omega')
)=A((\lambda^{-1}-T)^{\alpha}E(v,z),(u', \omega') )=0,
\end{eqnarray*}
i.e., $(\lambda^{*-1}-T^{*})^{\alpha}(\widetilde{u},
\widetilde{\omega})=0$. Hence $(\widetilde{u},
\widetilde{\omega})\in R(E^{*})$. Let
$(u_{H}',\omega_{H}')=\frac{E_{H}^{*}(\widetilde{u},
\widetilde{\omega})}{\overline{A((u_{H},\omega_{H}),E_{H}^{*}(\widetilde{u},
\widetilde{\omega}))}}$, then $(u_{H}',\omega_{H}')\in
R(E_{H}^{*})=M_{H}(\lambda^*)$,
\begin{eqnarray}\label{s4.2}
1&=&A((u_{H},\omega_{H}),(u_{H}',\omega_{H}'))\nonumber\\
&=&A((u_{H},\omega_{H})-(u_{H}^{-},\omega_{H}^{-})+(u_{H}^{-},\omega_{H}^{-}),(u_{H}',\omega_{H}'))\nonumber\\
&=&A((u_{H}^{-},\omega_{H}^{-}),(u_{H}',\omega_{H}'))\leq
\|(u_{H}^{-},\omega_{H}^{-})\|_{A}\|(u_{H}',\omega_{H}')\|_{A}.
\end{eqnarray}
From \cite{babuska,chatelin}, if $H\to 0$ we have
\begin{eqnarray*}
&&\|(E_{H}^{*}-E^{*})|_{R(E^{*})}\|_{A}\to 0,\nonumber\\
&&\|E(u_{H},\omega_{H})\|_{A}
=\|E(u_{H},\omega_{H})-(u_{H},\omega_{H})+(u_{H},\omega_{H})\|_{A}
\to 1,
\end{eqnarray*}
and
\begin{eqnarray*}
&&A((u_{H},\omega_{H}),E_{H}^{*}(\widetilde{u},\widetilde{\omega}))
=A((u_{H},\omega_{H}),(E_{H}^{*}-E^{*})(\widetilde{u},\widetilde{\omega}))+A((u_{H},\omega_{H}),E^{*}(\widetilde{u},\widetilde{\omega}))\nonumber\\
&&~~~=A((u_{H},\omega_{H}),(E_{H}^{*}-E^{*})(\widetilde{u},\widetilde{\omega}))+\|E(u_{H},\omega_{H})\|_{A}
\to 1.
\end{eqnarray*}
Thus, there is a constant $C_{0}$ independent of $H$ such that
\begin{eqnarray}\label{s4.3}
\|(u_{H}',\omega_{H}')\|_{A}=\frac{\|E_{H}^{*}(\widetilde{u},
\widetilde{\omega})\|_{A}}{|A((u_{H},\omega_{H}),E_{H}^{*}(\widetilde{u}, \widetilde{\omega}))|}\leq
C_{0}.
\end{eqnarray}
From (\ref{s3.1}), (\ref{s4.2}) and (\ref{s4.3}), deduce
\begin{eqnarray*}
&&|B((u_{H},\omega_{H}),
(u_{H}^{*},\omega_{H}^{*}))|=|\lambda_{H}^{-1}|
|A((u_{H},\omega_{H}),
(u_{H}^{*},\omega_{H}^{*}))|\nonumber\\
&&~~~=|\lambda_{H}^{-1}||A((u_{H},\omega_{H}),\frac{(u_{H}^{-},\omega_{H}^{-})}{\|(u_{H}^{-},\omega_{H}^{-})\|_{A}})|
=|\lambda_{H}^{-1}||A((u_{H}^{-},\omega_{H}^{-}),\frac{(u_{H}^{-},\omega_{H}^{-})}{\|(u_{H}^{-},\omega_{H}^{-})\|_{A}})|
\\
&&~~~=|\lambda_{H}^{-1}|\|(u_{H}^{-},\omega_{H}^{-})\|_{A} \geq
\frac{1}{ |\lambda_{H}|
 \|(u_{H}',\omega_{H}')\|_{A} } \geq
\frac{1}{C_{0}|\lambda_{H}|}.
\end{eqnarray*}
Then when $H$ is small enough $|B((u_{H},\omega_{H}),
(u_{H}^{*},\omega_{H}^{*}))|$ has a positive lower bound uniformly
with respect to $H$.
\end{proof}

\indent The above lemma is fundamental for studying two grid method
as well as multigrid method and adaptive algorithm for the
transmission eigenvalue problem and  many $2m$th order
nonselfadjoint elliptic eigenvalue problems.

 \indent{\bf Remark 4.1.}~~ Computational method for
 $(u_{H}^{*},\omega_{H}^{*})$.\\
 \indent {\bf Step 1.}~Find a basis
$\{(\psi_{i},\varphi_{i})\}_{i=k}^{k+q-1}$ in $M_{H}(\lambda^*)$:\\
\indent How to seek this basis efficiently is an important issue of
linear algebra. When the ascent of $\lambda_{i,H}$ is equal to 1
$(i=k,\cdots,k+q-1)$, we can use the Arnoldi algorithm
\cite{golub,lehoucq,saad} to solve the dual problem of (\ref{s4.1y})
and obtain this basis and meanwhile  MATLAB has provided   implemented Arnoldi
solvers ``sptarn" and ``eigs"; we can also use the two sided Arnoldi
algorithm in \cite{cullum} to compute both left and right
eigenvectors of (\ref{s4.1y}) at the same time, and obtain a basis
$\{(u_{i,H},\omega_{i,H})\}_{i=k}^{k+q-1}$ in $M_{H}(\lambda)$ and a
basis $\{(\psi_{i},\varphi_{i})\}_{i=k}^{k+q-1}$ in
$M_{H}(\lambda^*)$.\\
 \indent {\bf Step 2.}~
Solve the following equations: find $\beta_{i}\in\mathbb{C}$,
$i=k,\cdots,k+q-1$ such that
\begin{eqnarray*}
\sum\limits_{i=k}^{k+q-1}\beta_{i}A((\psi_{i},\varphi_{i}),(\psi_{l},\varphi_{l}))=A((u_{H},\omega_{H}),(\psi_{l},\varphi_{l})),~~~l=k,\cdots,k+q-1.
\end{eqnarray*}
Then
$(u_{H}^{-},\omega_{H}^{-})=\sum_{i=k}^{k+q-1}\beta_{i}(\psi_{i},\varphi_{i})$ and
$(u_{H}^{*},\omega_{H}^{*})=(u_{H}^{-},\omega_{H}^{-})/\|(u_{H}^{-},\omega_{H}^{-})\|_A$.\\
\indent How to determine the multiplicity $q$ of $\lambda$ is a
difficult task in mathematics and has few achievements by now.
In practical computation, $q$ is determined by computational
experience. When $q$ cannot be   determined exactly, we also use
 $q_{\lambda_{H}}$ to replace $q$, where
$q_{\lambda_{H}}$ is  the multiplicity of $\lambda_{H}$ that can be
easily determined in practical computation. \\

\indent Our analysis makes use of the following lemma which is a
generalization of Lemma 9.1 in \cite{babuska} and Lemma 3.6 in
\cite{kolman}.
\begin{lemma}
Let $(\lambda,u,\omega)$ and $(\lambda^{*},u^{*},\omega^{*})$ be the
eigenpair of (\ref{s2.6}) and (\ref{s2.15}), respectively. Then,
$\forall (v,z),(v^{*},z^{*})\in \mathbf{H}$, $B((v,z),
(v^{*},z^{*}))\not=0$, the generalized Rayleigh quotient satisfies
\begin{eqnarray}\label{s4.4}
&&\frac{A((v,z),(v^{*},z^{*}))}{B((v,z),(v^{*},z^{*}))}-\lambda=\frac{A((v,z)-(u,\omega),(v^{*},z^{*})-(u^{*},\omega^{*}))}{B((v,z),(v^{*},z^{*}))}
\nonumber\\
&&~~~~~~-\lambda
\frac{B((v,z)-(u,\omega),(v^{*},z^{*})-(u^{*},\omega^{*}))}{B((v,z),(v^{*},z^{*}))}.
\end{eqnarray}
\end{lemma}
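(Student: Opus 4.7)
The plan is a direct algebraic identity based on sesquilinearity of $A$ and $B$ together with the primal and dual eigenvalue equations. What has to be shown, after clearing the denominator $B((v,z),(v^{*},z^{*}))$, is the identity
\begin{eqnarray*}
A((v,z),(v^{*},z^{*})) - \lambda B((v,z),(v^{*},z^{*})) = A((v,z)-(u,\omega),(v^{*},z^{*})-(u^{*},\omega^{*}))\\
- \lambda B((v,z)-(u,\omega),(v^{*},z^{*})-(u^{*},\omega^{*})).
\end{eqnarray*}
So the strategy is simply to expand the right-hand side by sesquilinearity and collect the cross terms.

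First I would expand both $A(\cdot,\cdot)$ and $B(\cdot,\cdot)$ on the difference vectors; each becomes a sum of four terms. Then I would substitute the three reductions coming from the eigenvalue relations: from (\ref{s2.6}) applied to $(u,\omega)$ with test pair $(v^{*},z^{*})\in\mathbf{H}$, one has $A((u,\omega),(v^{*},z^{*})) = \lambda B((u,\omega),(v^{*},z^{*}))$; from (\ref{s2.15}) applied to $(u^{*},\omega^{*})$ with test pair $(v,z)\in\mathbf{H}$, one has $A((v,z),(u^{*},\omega^{*})) = \overline{\lambda^{*}} B((v,z),(u^{*},\omega^{*})) = \lambda B((v,z),(u^{*},\omega^{*}))$, using the relation $\lambda=\overline{\lambda^{*}}$ already established after (\ref{s2.18}); and finally at the pair $((u,\omega),(u^{*},\omega^{*}))$ either relation gives $A((u,\omega),(u^{*},\omega^{*})) = \lambda B((u,\omega),(u^{*},\omega^{*}))$.

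After these substitutions, the expansion of $A((v,z)-(u,\omega),(v^{*},z^{*})-(u^{*},\omega^{*}))$ becomes
\begin{eqnarray*}
A((v,z),(v^{*},z^{*})) - \lambda B((u,\omega),(v^{*},z^{*})) - \lambda B((v,z),(u^{*},\omega^{*})) + \lambda B((u,\omega),(u^{*},\omega^{*})),
\end{eqnarray*}
while $\lambda\, B((v,z)-(u,\omega),(v^{*},z^{*})-(u^{*},\omega^{*}))$ expands by sesquilinearity to
\begin{eqnarray*}
\lambda B((v,z),(v^{*},z^{*})) - \lambda B((u,\omega),(v^{*},z^{*})) - \lambda B((v,z),(u^{*},\omega^{*})) + \lambda B((u,\omega),(u^{*},\omega^{*})).
\end{eqnarray*}
Subtracting the second line from the first cancels the last three terms in each line, leaving precisely $A((v,z),(v^{*},z^{*})) - \lambda B((v,z),(v^{*},z^{*}))$. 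Dividing by $B((v,z),(v^{*},z^{*}))\neq 0$ yields (\ref{s4.4}).

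There is no real obstacle here beyond bookkeeping; the only subtle point is invoking $\lambda=\overline{\lambda^{*}}$ so that the dual relation contributes $\lambda B$ rather than $\overline{\lambda^{*}}B$, but this has been recorded immediately after (\ref{s2.18}). Hence the proof reduces to the two-line expansion sketched above.
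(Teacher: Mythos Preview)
Your proof is correct and follows essentially the same route as the paper: expand $A$ and $B$ on the difference pairs by sesquilinearity, replace the three cross terms using the primal equation (\ref{s2.6}), the dual equation (\ref{s2.15}) together with $\lambda=\overline{\lambda^{*}}$, and cancel to obtain $A((v,z),(v^{*},z^{*}))-\lambda B((v,z),(v^{*},z^{*}))$, then divide through. The paper's own argument is identical in structure, only presenting the combined expansion of $A-\lambda B$ in a single display rather than treating the two expansions separately.
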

\begin{proof}
 From (\ref{s2.6}) and (\ref{s2.15}) we have
\begin{eqnarray}
&&A((v,z)-(u,\omega),(v^{*},z^{*})-(u^{*},\omega^{*}))-\lambda B((v,z)-(u,\omega),(v^{*},z^{*})-(u^{*},\omega^{*}))\nonumber\\
&&~~~=A((v,z),(v^{*},z^{*}))-A((v,z),(u^{*},\omega^{*}))\nonumber\\
&&~~~~~~-A((u,\omega),(v^{*},z^{*}))+A((u,\omega),(u^{*},\omega^{*}))\nonumber\\
&&~~~~~~-\lambda(B((v,z),(v^{*},z^{*}))-B((v,z),(u^{*},\omega^{*}))\nonumber\\
&&~~~~~~-B((u,\omega),(v^{*},z^{*}))+B((u,\omega),(u^{*},\omega^{*})))\nonumber\\
&&~~~=A((v,z),(v^{*},z^{*}))-\lambda B((v,z),(u^{*},\omega^{*}))\nonumber\\
&&~~~~~~-\lambda B((u,\omega),(v^{*},z^{*}))+\lambda B((u,\omega),(u^{*},\omega^{*}))\nonumber\\
&&~~~~~~-\lambda(B((v,z),(v^{*},z^{*}))-B((v,z),(u^{*},\omega^{*}))\nonumber\\
&&~~~~~~-B((u,\omega),(v^{*},z^{*}))+B((u,\omega),(u^{*},\omega^{*})))\nonumber\\
&&~~~=A((v,z),(v^{*},z^{*}))-\lambda B((v,z),(v^{*},z^{*})),\nonumber
\end{eqnarray}
dividing both sides by $B((v,z),(v^{*},z^{*}))$   we
obtain the desired conclusion.
\end{proof}

\begin{theorem}
Let $\lambda_{H}, (u_{H},\omega_{H}), (u_{H}^{*},\omega_{H}^{*}),
\lambda^{h}, (u^{h},\omega^{h}), (u^{h*},\omega^{h*})$ be the
numerical eigenpairs obtained by Scheme 4.1, and
$\lambda_{H}=\lambda_{j,H}$ $(j=k,k+1,\cdots,k+q-1)$.
 Assume that the ascent of   $\lambda$
are equal to $1$, and  that R(D) and $(C2)$ are valid, $n \in
W^{2-s,\infty}(D)$ ($s=0,1$). Then there exists  $(u,\omega)\in
R(E)$ and
 $(u^{*},\omega^{*})\in R(E^{*})$ such that when $H$ is
properly small there hold
\begin{eqnarray}\label{s4.5}
&&\|(u^{h},\omega^{h})-(u,\omega)\|_{\mathbf{H}}\lesssim
H^{r_{s}}\varepsilon_{H}(\lambda)
+\varepsilon_{h}(\lambda),\\\label{s4.6}
&&\|(u^{h*},\omega^{h*})-(u^{*},\omega^{*})\|_{\mathbf{H}}\lesssim
H^{r_{s}}\varepsilon_{H}^{*}(\lambda^{*})
+\varepsilon_{h}^{*}(\lambda),\\\label{s4.7} && \mid
\lambda^{h}-\lambda \mid \lesssim
\{H^{r_{s}}\varepsilon_{H}(\lambda) +\varepsilon_{h}(\lambda)\}
\{H^{r_{s}}\varepsilon_{H}^{*}(\lambda^{*})
+\varepsilon_{h}^{*}(\lambda^{*})\},~~~s=0,1.
\end{eqnarray}
\end{theorem}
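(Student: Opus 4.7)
The plan is to first rewrite the Step 2 equations in operator form via (\ref{s3.3}) and (\ref{s3.12}):
\[
(u^h,\omega^h)=\lambda_H T_h(u_H,\omega_H),\qquad (u^{h*},\omega^{h*})=\overline{\lambda_H}\,T_h^{*}(u_H^{*},\omega_H^{*}),
\]
and then compare these against the identities $(u,\omega)=\lambda T(u,\omega)$ and $(u^{*},\omega^{*})=\overline{\lambda^{*}}T^{*}(u^{*},\omega^{*})$ by the telescoping decomposition
\[
(u^h,\omega^h)-(u,\omega)=\lambda_H T_h\bigl((u_H,\omega_H)-(u,\omega)\bigr)+(\lambda_H-\lambda)T_h(u,\omega)+\lambda(T_h-T)(u,\omega).
\]

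For (\ref{s4.5}), I would bound the three right-hand side terms separately. The middle one is of higher order since $|\lambda_H-\lambda|\lesssim\varepsilon_H(\lambda)\varepsilon_H^{*}(\lambda^{*})$ by (\ref{s3.18}). For the third, $T_h=P_hT$ together with $T(u,\omega)=\lambda^{-1}(u,\omega)$ gives $\|(T_h-T)(u,\omega)\|_{\mathbf{H}}\lesssim\|(I-P_h)(u,\omega)\|_{\mathbf{H}}\lesssim\varepsilon_h(\lambda)$. The key term is the first: using $\|T_h(f,g)\|_{\mathbf{H}}\lesssim\|(f,g)\|_{\mathbf{H}^{s}}$ (a consequence of (\ref{s2.14}) and uniform boundedness of the Ritz projection $P_h$) followed by (\ref{s3.35}) from Theorem 3.2 (available since $\alpha=1$), I obtain
\[
\|\lambda_H T_h((u_H,\omega_H)-(u,\omega))\|_{\mathbf{H}}\lesssim\|(u_H,\omega_H)-(u,\omega)\|_{\mathbf{H}^{s}}\lesssim H^{r_s}\varepsilon_H(\lambda).
\]
Summing the three contributions yields (\ref{s4.5}); the identical argument for the dual problem, using (\ref{s3.38}) in place of (\ref{s3.35}), produces (\ref{s4.6}).

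For (\ref{s4.7}), I would invoke Lemma 4.2 with $(v,z)=(u^h,\omega^h)$ and $(v^{*},z^{*})=(u^{h*},\omega^{h*})$. The continuity of $A$ and $B$ on $\mathbf{H}\times\mathbf{H}$ then bounds the numerator of the resulting identity by the product of the estimates (\ref{s4.5}) and (\ref{s4.6}), which is precisely the right-hand side of (\ref{s4.7}). What remains is to secure a uniform positive lower bound on the denominator $|B((u^h,\omega^h),(u^{h*},\omega^{h*}))|$.

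The main obstacle is exactly this lower bound. My plan is to transfer the coarse grid bound supplied by Lemma 4.1 to the fine grid through a perturbation argument: combining (\ref{s4.5}) and (\ref{s4.6}) with the coarse grid convergence estimates (\ref{s3.20}) and (\ref{s3.37}) and applying the triangle inequality gives $\|(u^h,\omega^h)-(u_H,\omega_H)\|_{\mathbf{H}}+\|(u^{h*},\omega^{h*})-(u_H^{*},\omega_H^{*})\|_{\mathbf{H}}\to 0$ as $H,h\to 0$, and the continuity estimate $|B((f,g),(v,z))|\lesssim\|(f,g)\|_{\mathbf{H}}\|(v,z)\|_{\mathbf{H}}$ then forces $|B((u^h,\omega^h),(u^{h*},\omega^{h*}))-B((u_H,\omega_H),(u_H^{*},\omega_H^{*}))|\to 0$. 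For $H$ sufficiently small the fine grid denominator therefore inherits the positive lower bound of Lemma 4.1, and (\ref{s4.7}) follows.
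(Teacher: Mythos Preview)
Your argument is correct and follows essentially the same route as the paper: the paper also writes $(u^{h},\omega^{h})=\lambda_{H}T_{h}(u_{H},\omega_{H})$, splits $\|(u^{h},\omega^{h})-(u,\omega)\|_{\mathbf{H}}$ by inserting $\lambda T_{h}(u,\omega)$ (grouping your first two terms into $T_{h}(\lambda_{H}(u_{H},\omega_{H})-\lambda(u,\omega))$ and bounding it in $\|\cdot\|_{\mathbf{H}^{s}}$ via Theorems~3.3 and~3.5), and then transfers the uniform lower bound on $|B((u_{H},\omega_{H}),(u_{H}^{*},\omega_{H}^{*}))|$ from Lemma~4.1 to $|B((u^{h},\omega^{h}),(u^{h*},\omega^{h*}))|$ by the same continuity/perturbation argument you describe. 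Two minor citation slips: the individual eigenvalue bound you use is~(\ref{s3.19}) with $\alpha=1$, not~(\ref{s3.18}), and estimate~(\ref{s3.35}) lives in Theorem~3.5, not Theorem~3.2.
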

 \begin{proof}
 Let $(u,\omega)\in R(E)$ such that $(u,\omega)-(u_{H},\omega_{H})$ and $\lambda-\lambda_{H}$ both satisfy Theorem
3.3 and Theorem 3.5. From (\ref{s2.12}) we get $(u,\omega)=\lambda T
(u,\omega)$, and from (\ref{s3.3}) and Step 2 of Scheme 4.1, we get
$ (u^{h},\omega^{h})=\lambda_{H}T_{h}(u_{H},\omega_{H}). $ From
(\ref{s3.3}), we have
\begin{eqnarray*}
\|\lambda_{H}T_{h}(u_{H},\omega_{H})-\lambda T_{h}
(u,\omega)\|_{\mathbf{H}}\lesssim
\|\lambda_{H}(u_{H},\omega_{H})-\lambda
(u,\omega)\|_{\mathbf{H}^{s}},
\end{eqnarray*}
and thus by using Theorem 3.3 and Theorem 3.5, we derive that
\begin{eqnarray*}
&&\|(u^{h},\omega^{h})-(u,\omega)\|_{\mathbf{H}}=\|\lambda_{H}T_{h}(u_{H},\omega_{H})-\lambda T (u,\omega)\|_{\mathbf{H}}\nonumber\\
&&~~~\leq \|\lambda_{H}T_{h}(u_{H},\omega_{H})-\lambda T_{h}
(u,\omega)\|_{\mathbf{H}} +\|\lambda T_{h} (u,\omega)-\lambda T
(u,\omega) \|_{\mathbf{H}}
\nonumber\\
&&~~~\lesssim \|\lambda_{H}(u_{H},\omega_{H})-\lambda
(u,\omega)\|_{\mathbf{H}^{s}} +|\lambda|\|T_{h} (u,\omega)-T
(u,\omega) \|_{\mathbf{H}}
\nonumber\\
&&~~~\lesssim H^{r_{s}}\varepsilon_{H}(\lambda)
+\varepsilon_{h}(\lambda),
\end{eqnarray*}
i.e., (\ref{s4.5}) holds.
 Similarly we can prove (\ref{s4.6}).\\
\indent From (\ref{s4.4}), we have
\begin{eqnarray}\label{s4.8}
&&\lambda^{h}-\lambda=\frac{A((u^{h},\omega^{h})-(u,\omega),(u^{h*},\omega^{h*})-(u^{*},\omega^{*}))}{B((u^{h},\omega^{h}),(u^{h*},\omega^{h*}))}\nonumber\\
&&~~~~~~-\lambda
\frac{B((u^{h},\omega^{h})-(u,\omega),(u^{h*},\omega^{h*})-(u^{*},\omega^{*}))}{B((u^{h},\omega^{h}),(u^{h*},\omega^{h*}))}.
\end{eqnarray}
Note that $(u_{H},\omega_{H})$  and $(u^{h},\omega^{h})$ just
approximate the same eigenfunction $(u,\omega)$,
$(u_{H}^{*},\omega_{H}^{*})$ and~$(u^{h*},\omega^{h*})$ approximate
the same adjoint eigenfunction $(u^{*},\omega^{*})$, and
$|B((u_{H},\omega_{H}), (u_{H}^*,\omega_{H}^{*}))|$ has a positive
lower bound uniformly with respect to $H$. From
\begin{eqnarray*}
&&B((u^{h},\omega^{h}),(u^{h*},\omega^{h*}))
=B((u^{h},\omega^{h}),(u^{h*},\omega^{h*}))-B((u,\omega),(u^{*},\omega^{*}))\\
&&~~~~~~+B((u,\omega),(u^{*},\omega^{*})) -B((u_{H},\omega_{H}),
(u_{H}^*,\omega_{H}^{*})) +B((u_{H},\omega_{H}),
(u_{H}^*,\omega_{H}^{*})),
\end{eqnarray*}
we know that $|B((u^{h},\omega^{h}),(u^{h*},\omega^{h*}))|$ has a
positive lower bound uniformly. Therefore from (\ref{s4.8}), we get
\begin{eqnarray}\label{s4.9}
|\lambda^{h}-\lambda|\lesssim
\|(u^{h},\omega^{h})-(u,\omega)\|_{\mathbf{H}}\|(u^{h*},\omega^{h*})-(u^{*},\omega^{*})\|_{\mathbf{H}}.
\end{eqnarray}
Substituting (\ref{s4.5}) and (\ref{s4.6}) into (\ref{s4.9}) yields
(\ref{s4.7}).
\end{proof}


\section{Numerical experiment}

\indent In this section, we will report  some numerical experiments
for the finite element discretization (\ref{s3.1}) and two grid
discretization scheme (Scheme 4.1)
to validate our theoretical results.\\
\indent We use MATLAB 2012a to solve (\ref{s1.1})-(\ref{s1.4}) on a
Lenovo G480 PC with 4G memory. Our program is implemented using the
package   iFEM \cite{chen}.\\
\indent Let $\{\xi_i\}_{i=1}^{N_h}$ be a basis of $S^h$ and
$u_h=\sum_{i=1}^{N_h}u_i\xi_i,\omega_h=\sum_{i=1}^{N_h}\omega_i\xi_i$.
A similar definition can be made for $u^*_h$ and $\omega^*_h$.
Denote $\overrightarrow{u}=(u_1,\cdots,u_{N_h})^T$ and
$\overrightarrow{\omega}=(\omega_1,\cdots,\omega_{N_h})^T$.
Similarly  $\overrightarrow{u*}$ and $\overrightarrow{\omega*}$ can
be defined from $u^*_h$ and $\omega^*_h$. To describe our algorithm,
we specify
the following matrices in  the discrete case.\\
\begin{center} \footnotesize
\begin{tabular}{lllll}\hline
Matrix&Dimension&Definition\\\hline
$A_h$&$N_h\times N_h$&$a_{l,i}=\int_D \frac{1}{n-1}\Delta\xi_i\Delta\xi_ldx$\\
$B_h$&$N_h\times N_h$&$b_{l,i}=-\int_D \{\frac{1}{n-1}\xi_i\Delta
\xi_l +
 \Delta \xi_i\frac{1}{n-1}\xi_l-
\nabla \xi_i\cdot\nabla\xi_l
\}dx$\\
$C_h$&$N_h\times N_h$&$c_{l,i}=-\int_D \frac{n}{n-1} \xi_i \xi_ldx$\\
$D_h$&$N_h\times N_h$&$d_{l,i}=\int_D  \xi_i \xi_ldx$\\
\hline
\end{tabular}
\end{center}
\noindent when $n\in W^{1,\infty}(D)$, $b_{l,i}=\int_D
\{\nabla(\frac{1}{n-1}\xi_i)\cdot\nabla \xi_l +
 \nabla \xi_i\cdot \nabla(\frac{n}{n-1}\xi_l) \}dx$.\\
Then (\ref{s3.1}) and (\ref{s3.11}) can be written as the
generalized eigenvalue problems
\begin{eqnarray} \label{s5.1}
\left(
\begin{array}{lcr}
A_h&0\\
0&D_h
\end{array}
\right) \left (
\begin{array}{lcr}
\overrightarrow{u}\\
\overrightarrow{\omega}
\end{array}
\right)=\lambda_h  \left(
\begin{array}{lcr}
B_h&C_h\\
D_h&0
\end{array}
\right) \left (
\begin{array}{lcr}
\overrightarrow{u}\\
\overrightarrow{\omega}
\end{array}
\right),
\end{eqnarray}
and
\begin{eqnarray} \label{s5.2}
\left(
\begin{array}{lcr}
A_h&0\\
0&D_h
\end{array}
\right) \left (
\begin{array}{lcr}
\overrightarrow{\overline{u}*}\\
\overrightarrow{\overline{\omega}*}
\end{array}
\right)=\lambda_h  \left(
\begin{array}{lcr}
B_h&D_h\\
C_h&0
\end{array}
\right) \left (
\begin{array}{lcr}
\overrightarrow{\overline{u}*}\\
\overrightarrow{\overline{\omega}*}
\end{array}
\right).
\end{eqnarray}

\indent Note that in (\ref{s5.1}), $A_{h}$ is a positive definite
Hermitian matrix and $D_{h}$ can be equivalently replaced by the
identity matrix $I_h$, which will lead to two sparser coefficient
matrices with a good properties. Thus the computation of the
eigenpairs for (\ref{s5.1}) is efficient.
Similarly, the matrices   for Scheme 4.1 can be given but are  omitted in this paper.\\
\indent For   convenience, we use the following notations in our tables and figures:\\
\indent $k_{j,h}=\sqrt{\lambda_{j,h}}$: The $jth$ eigenvalue
obtained by
(\ref{s3.1}) on $\pi_{h}$.\\
\indent $k_{j,H}=\sqrt{\lambda_{j,H}}$: The $jth$ eigenvalue
obtained by
(\ref{s3.1}) on $\pi_{H}$.\\
 \indent $ k_{j}^{h} =\sqrt{\lambda_{j}^h}$: The $jth$ eigenvalue obtained by Scheme
4.1.\\
\indent ---:     Failure of computation due to running out of memory.

\subsection{Model problem on the unit square}

\indent We first consider the case when $D$ is the unit square
$[0,1]^2$ and the index of refraction $n=16$ or $n=8+x_{1}-x_{2}$.
We use BFS element to compute the problem, and the numerical results
are shown in Tables 5.1-5.2. We also depict the error curves for the
numerical eigenvalues (see Figure 5.1).

\indent According to   regularity theory, we know $r_{0}=2$ and $u,
\omega\in H^{4}(D)$. When the ascent $\alpha=1$: according to
(\ref{s3.19}) and (\ref{s3.15}), the convergence order of the
eigenvalue approximation $k_{j,h}$ is four; according to
(\ref{s4.7}), when $h\gtrsim H^{2}$, the convergence order of the
$k_{j}^h$ is also four, i.e.,
\begin{eqnarray}\label{s5.3}
|k_{j}-k_{j,h}|\lesssim h^{4},~~~ |k_{j}-k_{j}^{h}|\lesssim h^{4}.
\end{eqnarray}

\indent It is seen from Figure 5.1 that the convergence order of the
numerical eigenvalues on the unit square is four, which
coincides with the theoretical result (\ref{s5.3}).\\
\indent In addition, Tables 5.1-5.2 show that the numerical
eigenvalues obtained by BFS elements give a good approximation; it
is worthwhile noticing that   two grid discretization scheme can achieve the
same convergence order as solving the eigenvalue problem by BFS
element directly. Moreover, we find that the two grid discretization scheme can be
performed on  finer meshes so that we can obtain more accurate
numerical eigenvalues. Therefore, the  two grid discretization scheme for
solving transmission eigenvalue problem is more efficient.\\

\begin{table}
\caption{The eigenvalues obtained by BFS element on the unit square,
$n=16$.}
\begin{center} \footnotesize
\begin{tabular}{ccccccc}\hline
$j$&$H$&$h$&$k_{j,H}$&$k_{j}^{h}$&$k_{j,h}$\\\hline
1&  $\frac{\sqrt2}{8}$& $\frac{\sqrt2}{32}$&    1.8800518272&   1.8795932933&   1.8795931085&\\
1&  $\frac{\sqrt2}{16}$&    $\frac{\sqrt2}{128}$&   1.8796216444&   1.8795911813&   1.8795911812&\\
1&  $\frac{\sqrt2}{32}$&    $\frac{\sqrt2}{256}$&   1.8795931085&   1.8795911697&   1.8795911747&\\
2&  $\frac{\sqrt2}{8}$& $\frac{\sqrt2}{32}$&    2.4462555154&   2.4442475976&   2.4442447101&\\
2&  $\frac{\sqrt2}{16}$&    $\frac{\sqrt2}{128}$&   2.4443713201&   2.4442361446&   2.4442361333&\\
2&  $\frac{\sqrt2}{32}$&    $\frac{\sqrt2}{256}$&   2.4442447101&   2.4442361002&---\\
3&  $\frac{\sqrt2}{8}$& $\frac{\sqrt2}{32}$&    2.4462555154&   2.4442475976&   2.4442447101&\\
3&  $\frac{\sqrt2}{16}$&    $\frac{\sqrt2}{128}$&   2.4443713201&   2.4442361446&   2.4442361334&\\
3&  $\frac{\sqrt2}{32}$&    $\frac{\sqrt2}{256}$&   2.4442447101&   2.4442361002& ---   \\
4&  $\frac{\sqrt2}{8}$& $\frac{\sqrt2}{32}$&    2.8681931483&   2.8664515120&   2.8664469634&\\
4&  $\frac{\sqrt2}{16}$&    $\frac{\sqrt2}{128}$&   2.8665606968&   2.8664391607&   2.8664391408&\\
4&  $\frac{\sqrt2}{32}$&    $\frac{\sqrt2}{256}$&   2.8664469634&
2.8664391111& ---
\\\hline
\end{tabular}
\end{center}
\end{table}

\begin{table}
\caption{The eigenvalues obtained by BFS element on the unit square,
$n=8+x_1-x_2$.}
\begin{center} \footnotesize
\begin{tabular}{ccccccc}\hline
$j$&$H$&$h$&$k_{j,H}$&$k_{j}^{h}$&$k_{j,h}$\\\hline
1&  $\frac{\sqrt2}{8}$& $\frac{\sqrt2}{32}$&    2.8234457937&   2.8221946996&   2.8221945051\\
1&  $\frac{\sqrt2}{16}$&    $\frac{\sqrt2}{128}$&   2.8222709846&   2.8221893629&   2.8221893619\\
1&  $\frac{\sqrt2}{32}$&    $\frac{\sqrt2}{256}$&   2.8221945051&   2.8221893480&   ---\\
2&  $\frac{\sqrt2}{8}$& $\frac{\sqrt2}{32}$&    3.5424522436&   3.5387180040&   3.5387126105\\
2&  $\frac{\sqrt2}{16}$&    $\frac{\sqrt2}{128}$&   3.5389469722&   3.5386967795&   3.5386967579\\
2&  $\frac{\sqrt2}{32}$&    $\frac{\sqrt2}{256}$&   3.5387126105&   3.5386967008&---\\
5,6&    $\frac{\sqrt2}{8}$& $\frac{\sqrt2}{32}$&    4.4971031374 &   4.4964665266 & 4.4965591247 \\
&&&  $\pm$.8770188489$i$&     $\pm$0.8715534026$i$&  $\pm$0.8715053132$i$\\
5,6&    $\frac{\sqrt2}{32}$&    $\frac{\sqrt2}{128}$&   4.4965591247 &  4.4965519816&     4.4965519832 \\
&&&  $\pm$0.8715053132$i$&   $\pm$0.8714818735$i$&     $\pm$0.8714818728$i$\\
5,6&    $\frac{\sqrt2}{32}$&    $\frac{\sqrt2}{256}$&   4.4965591247 &  4.4965519531 &  ---\\
&&&  $\pm$0.8715053132$i$&   $\pm$0.871481788$i$~~& ---
\\\hline
\end{tabular}
\end{center}
\end{table}

\subsection{Model problem on the L-shaped domain}

We consider the case when $D$=$(-1,1)^2\backslash ([0,1)\times (-1,
0])$ is the L-shaped domain and the index of refraction $n=16$ or
$n=8+x_{1}-x_{2}$.   The numerical results obtained by BFS element
are shown in Tables 5.3-5.4 and Figure 5.2.\\
\indent Figure 5.2 indicates that  on the L-shaped domain, the
convergence order of $k_{1,h},k_{2,h}$ is around 1.3 and 2.3 respectively for
$n=16$, and
 the convergence order of $k_{1,h},k_{5,h}$ is around 1.4 for $n=8+x_1-x_2$.
  This fact suggests that the four eigenfunctions on the non-convex domain do
have singularities to different degrees.

\begin{table}
\caption{The eigenvalues obtained by BFS element on the L-shaped
domain, $n=16$.}
\begin{center} \footnotesize
\begin{tabular}{ccccccc}\hline
$j$&$H$&$h$&$k_{j,H}$&$k_{j}^{h}$&$k_{j,h}$\\\hline
1&  $\frac{\sqrt2}{8}$& $\frac{\sqrt2}{32}$&    1.4850653844&   1.4781249432&   1.4780403370\\
1&  $\frac{\sqrt2}{16}$&    $\frac{\sqrt2}{64}$&    1.4802422297&   1.4770298927&   1.4770116105\\
1&  $\frac{\sqrt2}{16}$&    $\frac{\sqrt2}{128}$&   1.4802422297&   1.4765529659&   ---\\
2&  $\frac{\sqrt2}{8}$& $\frac{\sqrt2}{32}$&    1.5705634174&   1.5697720130&   1.5697716222\\
2&  $\frac{\sqrt2}{16}$&    $\frac{\sqrt2}{64}$&    1.5699010557&   1.5697385580&   1.5697385335\\
2&  $\frac{\sqrt2}{16}$&    $\frac{\sqrt2}{128}$&   1.5699010557&   1.5697293878&   ---\\
3&  $\frac{\sqrt2}{8}$& $\frac{\sqrt2}{32}$&    1.7078128918&   1.7055170229&   1.7055794458\\
3&  $\frac{\sqrt2}{16}$&    $\frac{\sqrt2}{64}$&    1.7061981331&   1.7052820968&   1.7052949545\\
3&  $\frac{\sqrt2}{16}$&    $\frac{\sqrt2}{128}$&   1.7061981331&   1.7051443044&   ---\\
4&  $\frac{\sqrt2}{8}$& $\frac{\sqrt2}{32}$&    1.7834680505&   1.7831208381&   1.7831208523\\
4&  $\frac{\sqrt2}{16}$&    $\frac{\sqrt2}{64}$&    1.7831489498&   1.7831171004&   1.7831171097\\
4&  $\frac{\sqrt2}{16}$&    $\frac{\sqrt2}{128}$&   1.7831489498&   1.7831163182&   ---
\\\hline
\end{tabular}
\end{center}
\end{table}

\begin{table}
\caption{The eigenvalues obtained by BFS element on the L-shaped
domain, $n=8+x_1-x_2$.}
\begin{center} \footnotesize
\begin{tabular}{ccccccc}\hline
$j$&$H$&$h$&$k_{j,H}$&$k_{j}^{h}$&$k_{j,h}$\\\hline
1&  $\frac{\sqrt2}{8}$& $\frac{\sqrt2}{32}$&    2.3127184233&   2.3045393229&   2.3043808707\\
1&  $\frac{\sqrt2}{16}$&    $\frac{\sqrt2}{64}$&    2.3069612717&   2.3032153953&   2.3031811119\\
1&  $\frac{\sqrt2}{16}$&    $\frac{\sqrt2}{128}$&   2.3069612717&   2.3026641395&   ---\\
2&  $\frac{\sqrt2}{8}$& $\frac{\sqrt2}{32}$&    2.3974892428&   2.3957871378&   2.3957863810\\
2&  $\frac{\sqrt2}{16}$&    $\frac{\sqrt2}{64}$&    2.3960567236&   2.3957182671&   2.3957182148\\
2&  $\frac{\sqrt2}{16}$&    $\frac{\sqrt2}{128}$&   2.3960567236&   2.3956994585&   ---\\
5,6&    $\frac{\sqrt2}{8}$& $\frac{\sqrt2}{32}$&    2.9287346709 &  2.9252416365 &  2.9257099237 \\
&&&  $\pm$0.5743458101$i$&   $\pm$0.5664545248$i$&   $\pm$0.5664307881$i$\\
5,6&    $\frac{\sqrt2}{16}$&    $\frac{\sqrt2}{64}$&    2.9272401495 &   2.9248351944&  2.9249318006\\
&&&  $\pm$0.5686045702$i$&    $\pm$0.5654537885$i$&  $\pm$0.5654487200$i$\\
5,6&    $\frac{\sqrt2}{16}$&    $\frac{\sqrt2}{128}$&   2.9272401495 &  2.9244335090&   ---\\
&&&  $\pm$0.5686045702$i$&   $\pm$0.5649994678$i$&---
\\\hline
\end{tabular}
\end{center}
\end{table}

 \begin{figure}
\includegraphics[width=0.5\textwidth]{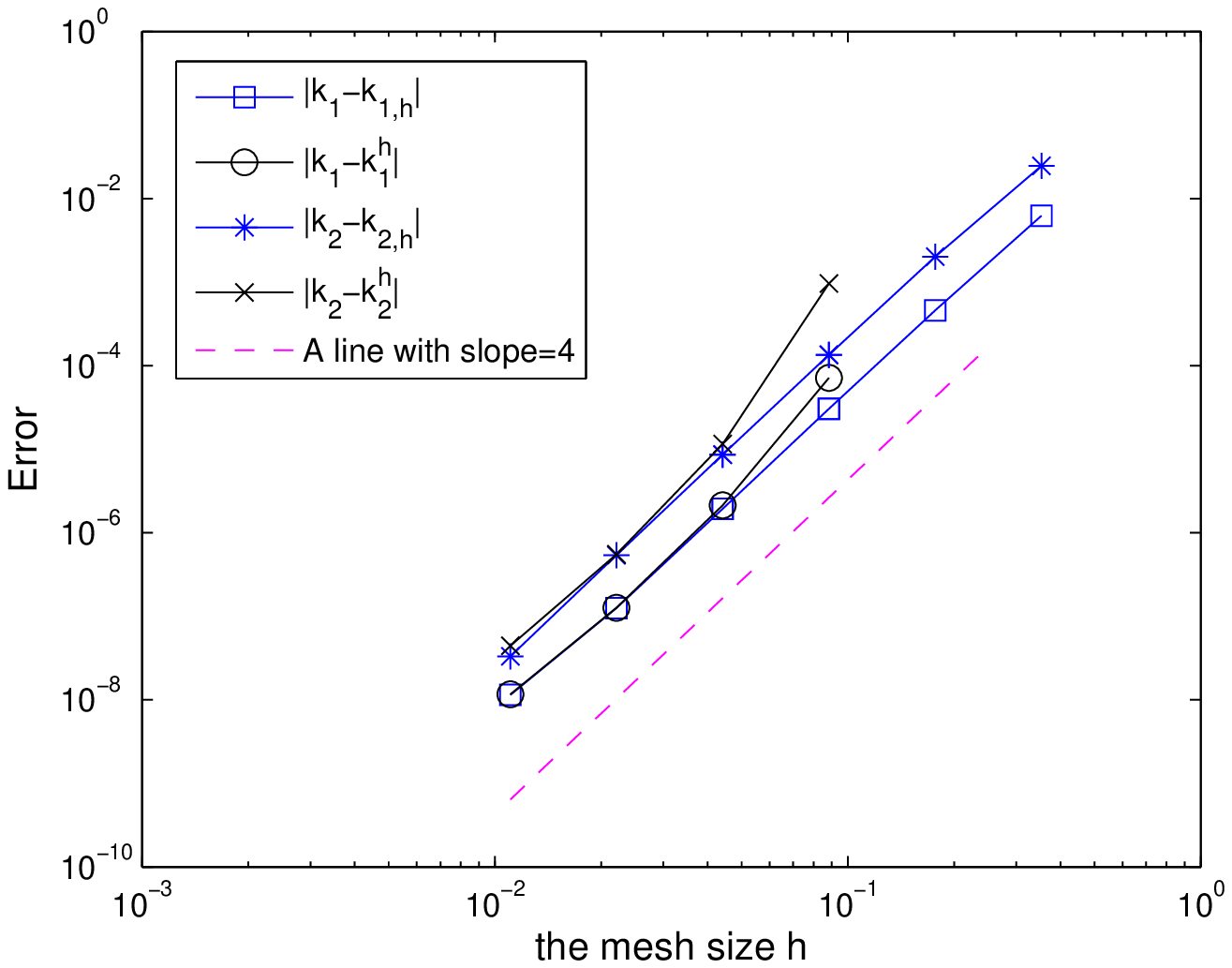}
 \includegraphics[width=0.5\textwidth]{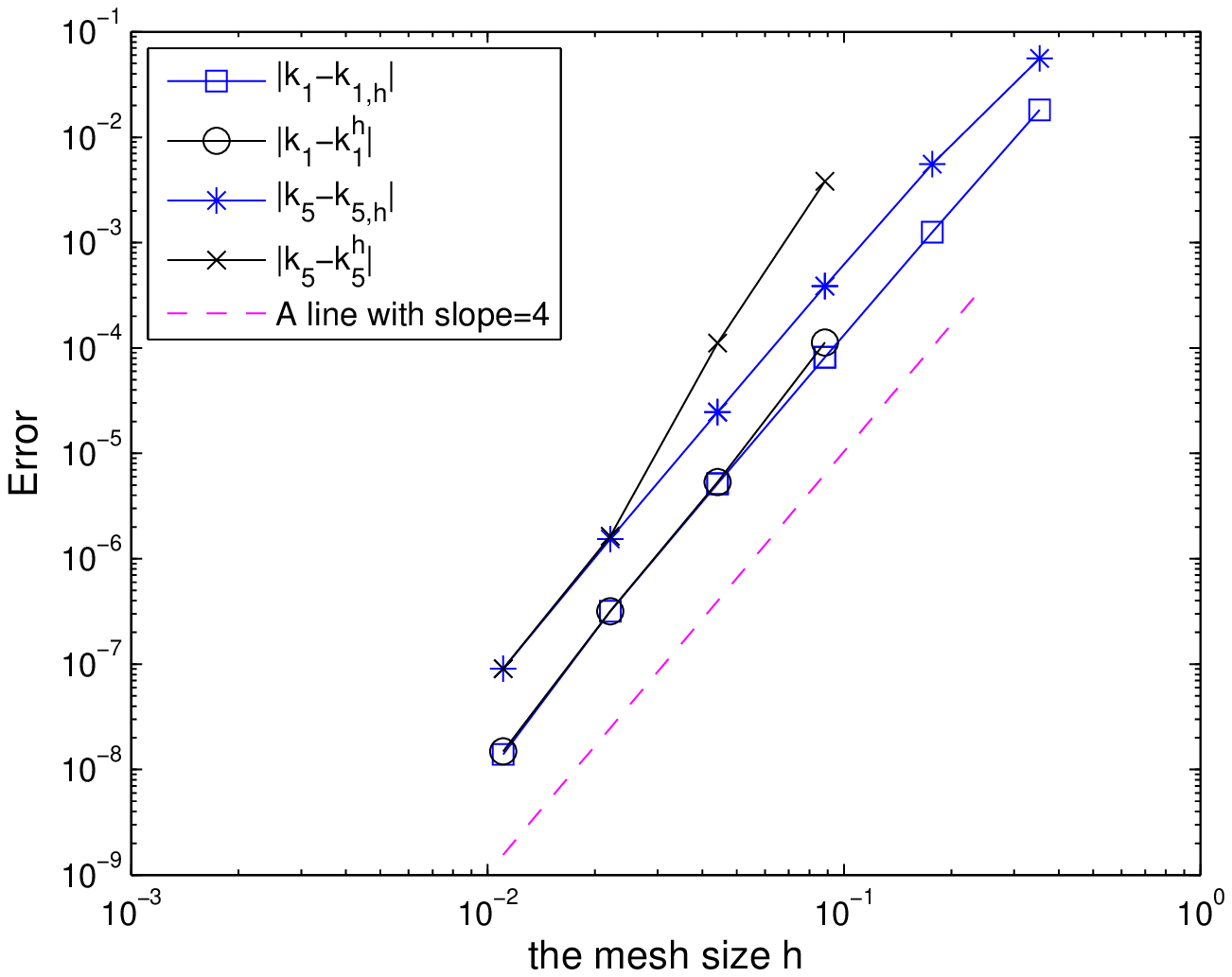}
\caption{{ Error curves on the unit square for $k_{1},k_{2}$  with
$n=16$ (left), and for $k_{1},k_{5}$  with $n=8+x_1-x_2$ (right).}}
 \end{figure}
  \begin{figure}
  \includegraphics[width=0.5\textwidth]{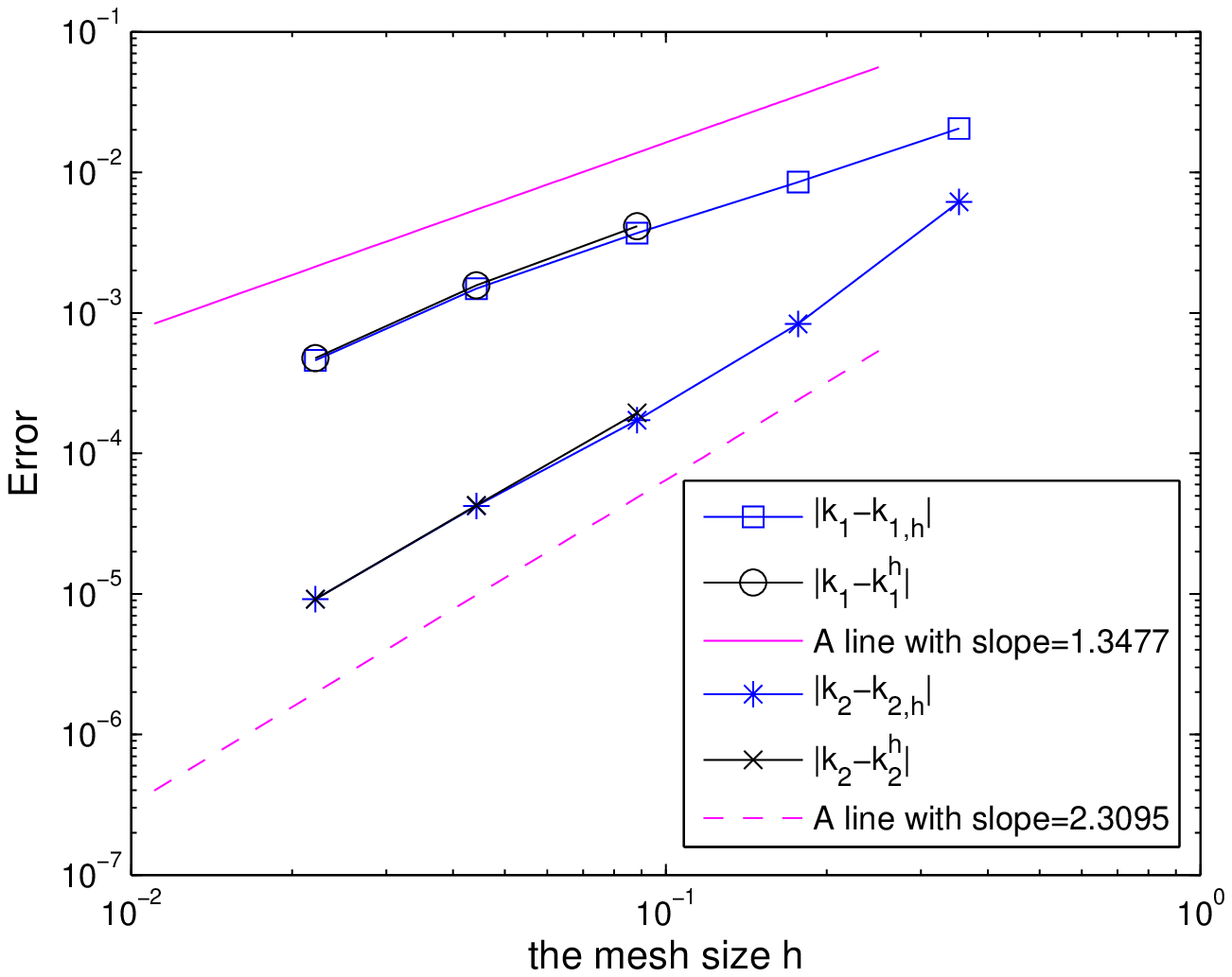}
 \includegraphics[width=0.5\textwidth]{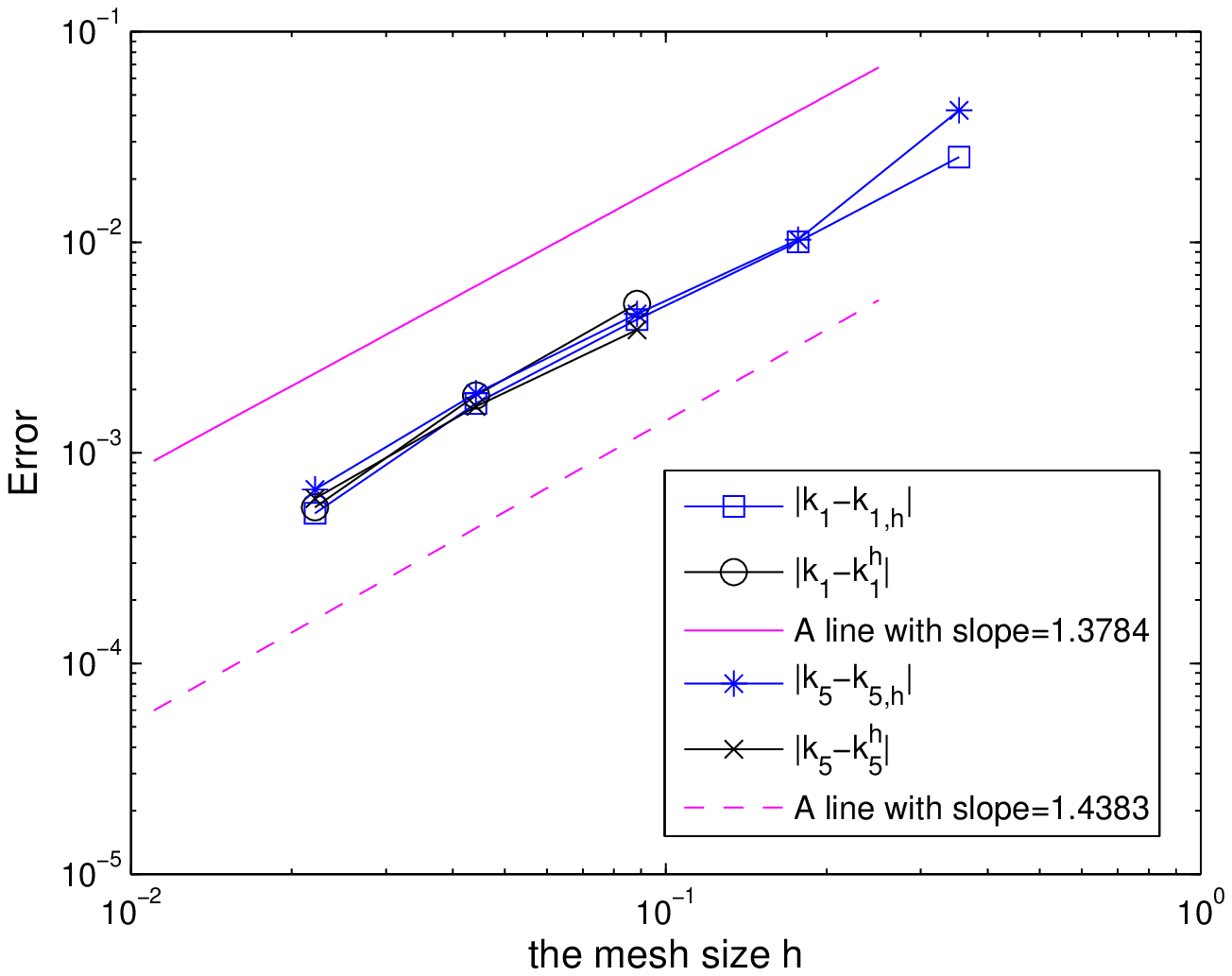}
\caption{{ Error curves on the L-shaped domain for $k_{1},k_{2}$  with $n=16$ (left), and for $k_{1},k_{5}$  with $n=8+x_1-x_2$ (right).}}
 \end{figure}

\subsection{Model problem on the circular domain}
We also investigate the case of $n$ being piecewise constant for a
disk $D$ of the radius 1. Let
   $n(x)=n_1$ for $x\in~D_1$ and $n(x)=n_2$ for $x\in~D\backslash \overline{D_1}$,
   $D_1$ being an inner disk of the radius $r_1<1$. For this disk domain we generate a triangular mesh with
    $h\approx\frac{1}{40}$ and number of degrees of freedom  101040 and move the nodes outside and nearest the inner disk onto the inner circle.
     And we use the Argyris element to solve (\ref{s1.1})-(\ref{s1.4}) on the mesh and the  numerical eigenvalues associated with different
     $n$ and $r_1$ are shown in Table 5.5.
The analytic eigenvalues can be referred to Tables 2 and 3 in
\cite{gintides}.
\begin{table}
\caption{The eigenvalues obtained by Argyris element on the circle
of radius 1 for piecewise constant $n$.}
\begin{center} \footnotesize
\begin{tabular}{lll}\hline
$n_1,n_2,r_1$&\{$k_{1,h},k_{2,h},k_{3,h},k_{4,h},k_{5,h},k_{6,h},k_{7,h},k_{8,h},k_{9,h},k_{10,h},k_{11,h}$\}\\\hline
13 5 0.5  &1.4975,1.7353,1.7357,2.1709,2.1721,2.3570$\pm$0.4881i,2.6995,2.6999\\
~2 4 0.5  &2.4349$\pm$0.6969i,3.5812$\pm$0.5731i,3.5824$\pm$0.5730i,3.9137,3.9196,4.1160\\
~5 8 0.6 &1.7891,2.2512,2.2520,2.5574$\pm$0.3780i,2.6681,2.6717,3.0376,3.0388\\
~5 3 0.6  &2.2996$\pm$0.7437i,2.8876,2.8885,3.1861,3.2953,3.2982\\
10 8 0.7  &1.3732,1.7322,1.7328,2.1101,2.1121,2.3665$\pm$0.4401i,2.4958,2.4964\\
~2 4 0.7  &2.4280$\pm$0.6299i,3.7582$\pm$0.6135i,3.7593$\pm$0.6136i,4.9689$\pm$0.4384i,    4.9751$\pm$0.4395i,5.0515\\
13 11 0.8 &1.1499,1.4899,1.4903,1.8261,1.8278,2.1570,2.1575\\
~3 ~6 0.8 &2.2223$\pm$0.4796i,3.0286,3.0297,3.6711,3.6748,3.8012\\
~6 13 0.9 &1.5929,2.0271,2.0280,2.4687$\pm$0.2643i,2.5142,2.5171,3.0034,3.0043\\
~6 ~2 0.9 &2.0222,2.4190,2.4192,2.7401$\pm$0.4468i,2.9309,2.9317
\\\hline
\end{tabular}
\end{center}
\end{table}

%


\begin{thebibliography}{s10}
\bibitem{an1} {\sc J. An, J. Shen}, {\em A spectral-element method for transmission
eigenvalue problems}, J. Sci. Comput., 57(2013), pp. 670-688.

\bibitem{babuska}{\sc I. Babuska, J.E. Osborn}, {\em Eigenvalue Problems}, in: P.G. Ciarlet, J.L. Lions,(Ed.),
Finite Element Methods (Part 1), Handbook of Numerical Analysis,
vol.2, Elsevier Science Publishers, North-Holand, 1991, pp. 640-787.

\bibitem{blum}{\sc H. Blum, R. Rannacher}, {\em On the boundary value problem of the biharmonic operator on domains with
angular corners}, Math. Method Appl. Sci., 2(1980), pp. 556-581

\bibitem{brenner}{\sc S.C. Brenner, L.R. Scott}, {\em The Mathematical Theory of Finite Element Methods}, 2nd ed.,
Springer-Verlag, New york, 2002.

\bibitem{berezanskii}{\sc JU. M. Berezanski$\check{i}$}, {\em Expansions in Eigenfunctions of Selfadjoint
Operators}, Naukova Dumka, Kiev, 1965; English transl., Transl.
Math. Monos., vol. 17, Amer. Math. Soc., Providence, R.I., 1968.

\bibitem{cakoni1}{\sc F. Cakoni, M. Cayoren, D. Colton}, {\em Transmission eigenvalues
and the nondestructive testing of dielectrics}, Inverse Problems,
24(2008), 065016.

\bibitem{cakoni2}{\sc F. Cakoni, D. Gintides, H. Haddar}, {\em The existence of an
infinite discrete set of transmission eigenvalues}, SIAM J. Math.
Anal., 42(2010), pp. 237-255

\bibitem{cakoni3}{\sc F. Cakoni, H. Haddar}, {\em On the existence of
transmission eigenvalues in an inhomogeneous medium}, Appl. Anal.
88(4)(2009), pp. 475-493.

\bibitem{cakoni4}{\sc F. Cakoni, P. Monk, J. Sun},  {\em Error analysis for the finite element approximation of
transmission eigenvalues}, Comput.  Methods   Appl. Math., 14(2014),
pp. 419-427.

\bibitem{chatelin}{\sc F. Chatelin}, {\em Spectral Approximations of Linear Operators},
Academic Press, New York, 1983.

\bibitem{chen}{\sc L. Chen}, {\em An integrated finite element method package in MATLAB, Technical
Report},
University of California at Irvine, California, 2009.

\bibitem{ciarlet}{\sc P.G. Ciarlet}, {\em Basic error estimates for elliptic proplems}, in: P.G. Ciarlet, J.L. Lions, (Ed.),
Finite Element Methods (Part1), Handbook of Numerical Analysis,
vol.2, Elsevier Science Publishers, North-Holand, 1991, pp.21-343.

\bibitem{colton1}{\sc D. Colton, R. Kress}, {\em Inverse Acoustic and Electromagnetic
Scattering Theory}, 2nd ed., Vol. 93 in Applied Mathematical
Sciences, Springer, New York, 1998.

\bibitem{colton2}{\sc D. Colton, P. Monk, J. Sun}, {\em Analytical and computational
methods for transmission eigenvalues}, Inverse Problems, 26(2010),
045011.

\bibitem{colton3}{\sc D. Colton, L. P$\ddot{a}$iv$\ddot{a}$rinta, J. Sylvester}, {\em The interior
transmission problem}, Inverse Problem Imaging, 1(2007), pp. 13-28.

\bibitem{cullum}{\sc J. Cullum and T. Zhang}, {\em Two-sided Arnoldi and nonsymmetric Lanczos algorithms}, SIAM
J. Matrix Anal. Appl., 24 (2002), pp. 303-319.

\bibitem{dai}{\sc X. Dai, A. Zhou}, {\em Three-scale finite element discretizations
for quantum eigenvalue problems}, SIAM J. Numer. Anal., 46(1)
(2008), pp. 295-324.

\bibitem{dunford}{\sc N. Dunford, J.T. Schwartz}, Linear Operators, Vol.2: Spectral Theory,
Selfadjoint operators in Hilbert space, Interscience, New York
,1963.

\bibitem{gintides} {\sc D. Gintides and N. Pallikarakis},  {\em A computational method for the inverse transmission
eigenvalue problem}, Inverse Problems, 29(2013), 104010(14pp).

\bibitem{golub} {\sc G. H. Golub, C. F. V. Loan}, {\em Matrix
Computations, 4th Ed.}, Johns Hopkins University Press, Baltimore,
MD, 2015.

\bibitem{ji}{\sc X. Ji, J. Sun, T. Turner}, {\em Algorithm 922: A mixed finite
element method for Helmholtz transmission eigenvalues}, ACM
Transaction on Math. Softw., 38 (2012), pp. 29:1-8.

\bibitem{ji2}{\sc X. Ji, J. Sun, H. Xie}, {\em A multigrid method for Helmholtz transmission eigenvalue
problems}, J. Sci. Comput., 60(2014), pp. 276-294.

\bibitem{kirsch}{\sc A. Kirsch}, {\em On the existence of transmission eigenvalues}, Inverse
Probl. Imaging, 3(2009), pp. 155-172.

\bibitem{kolman}{\sc K. Kolman}, {\em A two-level method for nonsymmetric eigenvalue problems},
Acta Math. Appl. Sin. Engl. Ser., 21(1)(2005), pp. 1-12.

\bibitem{lehoucq}{\sc Lehoucq,  R.B. and D.C. Sorensen}, {\em Deflation Techniques for an Implicitly Re-Started Arnoldi Iteration}, SIAM J. Matrix Anal.   Appl., 17(1996), pp. 789-821.

\bibitem{monk}{\sc P. Monk, J. Sun}, {\em Finite element methods of Maxwell
transmission eigenvalues}, SIAM J. Sci. Comput., 34 (2012), pp.
B247-264.

\bibitem{oden}{\sc J.T. Oden, J. N. Reddy}, {\em An Introduction to the Mathematical Theory of Finite Elements},
Courier Dover Publications, New York, 2012.

\bibitem{paivarinta}{\sc L. P$\ddot{a}$iv$\ddot{a}$rinta, J. Sylvester}, {\em Transmission
eigenvalues}, SIAM J. Math. Anal., 40(2)(2008), pp. 738-753

\bibitem{rynne}{\sc B.P. Rynne, B.D. Sleeman}, {\em The interior transmission problem
and inverse scattering from inhomogeneous media}, SIAM J. Math.
Anal., 22 (1991), pp. 1755-1762.

\bibitem{saad}{\sc Y. Saad}, {\em Variations on Arnoldi's method for computing eigenelements
of large unsymmetric matrices}, Linear Algebra Appl., 34 (1980), pp.
269-295.

\bibitem{sun1}{\sc J. Sun}, {\em Iterative methods for transmission eigenvalues}, SIAM J.
Numer. Anal., 49(2011), pp. 1860-1874.

\bibitem{su4}{\sc J. Sun, L. Xu}, {\em Computation of Maxwell¡¯s transmission
eigenvalues and its applications in inverse medium problems},
Inverse Problems, 29(2013), 104013 (18pp).

\bibitem{tisseur}{\sc F. Tisseur, K. Meerbergen}, {\em The quadratic eigenvalue
problem}, SIAM Rev., 43(2001), pp. 235-86.

\bibitem{xu1}{\sc J. Xu}, {\em A new class of iterative methods for nonselfadjoint or indefinite
problems}, SIAM J. Numer. Anal., 29(1992), pp. 303-319.

\bibitem{xu3}{\sc J. Xu and A. Zhou}, {\em A two-grid discretization scheme for eigenvalue problems}, Math. Comput.,70 (2001), pp. 17-25.

\bibitem{yang1} {\sc Y. Yang, H. Bi, J. Han and Y. Yuan},
{\em The shifted-inverse iteration based on the multigrid
discertiaztions for eigenvalue problems},
 SIAM J. SCI. Comput., 37 (2015), pp. A2583-A2606.

\bibitem{zhou}{\sc J. Zhou, X. Hu, L. Zhong, S. Shu,   L. Chen}, {\em Two-grid methods for Maxwell eigenvalue problems},
SIAM J. Numer. Anal., 52 (2014), pp. 2027-2047.



\end{thebibliography}
\end{document}